\renewenvironment{proof}[1][\proofname]{%
	\par\pushQED{\qed}\normalfont%
	\topsep6\p@\@plus6\p@\relax
	\trivlist\item[\hskip\labelsep\bfseries#1\@addpunct{.}]%
	\ignorespaces
}{%
	\popQED\endtrivlist\@endpefalse
}
\newtheorem{definition}{Definition}
\newtheorem{corollary}{Corollary}[section]
\newtheorem{theorem}{Theorem}[section]
\newtheorem{lemma}{Lemma}[section]
\newtheorem{example}{Example}[section]
\numberwithin{equation}{section}
\begin{document}

	\setcounter{page}{1}

	\thispagestyle{empty}
	\markboth{}{}

	\pagestyle{myheadings}
	\markboth{}{ }
	
	\date{}
	
	
	\noindent  
	
	\vspace{.1in}
	
	{\baselineskip 20truept
		
		\begin{center}
			{\Large {\bf On General Weighted Extropy of Percentile Ranked Set Sampling }} \footnote{\noindent
				{\bf $^{\#}$} E-mail: nitin.gupta@maths.iitkgp.ac.in\\
				{\bf * }  corresponding author E-mail: pradeep.maths@kgpian.iitkgp.ac.in}\\
			
		\end{center}
		
		\vspace{.1in}
		
		\begin{center}
			{\large {\bf Pradeep Kumar Sahu* and Nitin Gupta $^{\#}$}}\\
			{\large {\it Department of Mathematics, Indian Institute of Technology Kharagpur, West Bengal 721302, India }}
			\\
		\end{center}
	}
	\vspace{.1in}
	\baselineskip 12truept

	
	\begin{center}
		{\bf \large Abstract}\\
	\end{center}
	The extropy measure, first proposed by Lad, Sanfilippo, and Agro in their (2015) paper in Statistical Science, has attracted considerable attention in recent years. Our study introduces a fresh approach to representing weighted extropy in the framework of percentile ranked set sampling. Furthermore, we provide additional insights such as stochastic orders, characterizations, and bounds. Our findings illuminate the comparison between the weighted extropy of percentile ranked set sampling and its equivalent in simple random sampling.\\
	\\
	\textbf{Keyword:} Extropy, General Weighted Extropy, Ranked Set Sampling, Percentile Ranked Set
Sampling, Stochastic Order.\\
	\newline
	\noindent  {\bf Mathematical Subject Classification}: {\it 62B10, 62D05}
	\section{Introduction}
	In (1952), McIntyre introduced ranked set sampling (RSS) as a method to estimate mean pasture yields, proving its superiority over simple random sampling (SRS) for population mean estimation.

Let us consider a random variable $X$ with probability density function (pdf) $f$, cumulative distribution function (cdf) $F$, and survival function (sf) $\bar F = 1 - F$. For a one-cycle RSS, we randomly select $n^2$ units from $X$, then allocate them randomly into $n$ sets of size $n$. Within each set, units are ranked based on the variable of interest. The selection process starts with choosing the smallest ranked unit from the first set, followed by the second smallest from the second set, and so on until the $n$th smallest ranked unit is selected from the last set. This process is repeated $m$ times to obtain a sample of size $mn$, referred to as the RSS from the distribution $F$. Denoting the ranked-set sample as $\mathbf{X}_{RSS}^{(n)}=\{X_{(i:n)i},\ i=1,\ldots,n\}$, where $X_{(i:n)i}$ represents the $i$-th order statistics from the $i$-th sample with a size of $n$.

For further understanding of RSS, refer to the works of Wolfe (2004), Chen, Bai, and Sinha (2004), Al-Nasser (2007), Al-Saleh and Diab (2009), and Raqab and Qiu (2019).

 In (1996), Samawi, Ahmed, and Abu-Dayyeh developed extreme ranked set sampling (ERSS) to improve the efficiency of estimating population parameters compared to SRS, using an equivalent number of units. ERSS accomplishes this by considering only the minimum and maximum ranked units when 
$n$ is even, and including the minimum, maximum, and median ranked units when $n$ is odd..

For even $n$, ERSS selects units for measurement from the smallest ranked units in the first $\frac{n}{2}$ sets and the largest ranked units in the last $\frac{n}{2}$ sets. In the case of odd $n$, units are selected from the smallest ranked units in the first $\frac{n-1}{2}$ sets, the largest ranked units in the subsequent $\frac{n-1}{2}$ sets, and the median value of the last set in the sample corresponds to the last unit.

 Muttlak (2003) proposes Percentile Ranked Set Sampling (PRSS) as a method to more efficiently estimate population parameters compared to SRS using an identical number of units. PRSS achieves this by solely utilizing the $[p(n+1)]$-th rank for first $\frac{n}{2}$ sets and $[q(n+1)]$-th ranked units for the next $\frac{n}{2}$ when $n$ is even and,  $[p(n+1)]$-th rank for first $\frac{n-1}{2}$ sets and $[q(n+1)]$-th ranked units for next $\frac{n-1}{2}$ sets along with the median ranked unit fom the last unit when $n$ is odd. In this sampling approach, $p\in (0,1)$ represents the percentile value, $q=1-p$; and $[p(n+1)]$ and $[q(n+1)]$ are rounded to the nearest integer. The cycle may be repeated $m$ times to get $nm$ units. These $nm$ units form the PRSS data.
 

In (2023), Akdeniz and Yildiz explored how ranking error models impact mean estimators derived from RSS and its variants like ERSS and PRSS, across various distributions, set sizes, and cycle sizes within infinite populations. They conducted a Monte Carlo simulation study supplemented by real-life data, revealing that RSS, ERSS, and PRSS outperform Simple Random Sampling (SRS).
	
	In (1948), Shannon introduced entropy, a concept applicable across multiple fields including information theory, physics, probability and statistics, economics, and communication theory. Entropy measures the average level of uncertainty associated with the results of a random experiment.
	
	Entropy measures the average level of uncertainty linked to the results of a random experiment. The differential expression for Shannon entropy is as follows:
	\begin{equation*}
		\label{1eq2}
		H(X) = E\left(-\ln f(X)\right) = - {\int_{-\infty}^{\infty}{f(x) \ln \left( f(x)\right)dx}}.
	\end{equation*} 
	This measure finds applications in diverse contexts, such as order statistics and record statistics. Its usage is exemplified in the studies of Bratpour et al. (2007), Raqab and Awad (2000, 2001), Zarezadeh and Asadi (2010), Abo-Eleneen (2011), Qiu and Jia (2018a, 2018b), and Tahmasebi et al. (2016).
	
	In modern times, another measure for evaluating uncertainty, called extropy, has gained prominence. Lad et al. (2015) presented extropy as the complementary counterpart of Shannon entropy, defining it as:
	\begin{equation}\label{extropy}
		J(X)=-\frac{1}{2} \int_{-\infty}^{\infty}f^2(x)dx=-\frac{1}{2}E\left(f(X)\right).
	\end{equation}

	In a study led by Qiu (2017), an examination of characterization results, lower limits, monotonic properties, and statistical uses related to the extropy of order statistics and record values was undertaken. Separately, Balakrishnan et al. (2020) and Bansal and Gupta (2021) introduced the idea of weighted extropy as:
	\begin{equation}\label{wtdextropy}
		J^w(X)=-\frac{1}{2}\int_{-\infty}^{\infty}xf^2(x)dx.
	\end{equation}
	
	In their study, Balakrishnan et al. (2020) explored characterization results and limits for weighted versions of extropy iterations, residual extropy, past extropy, bivariate extropy, and bivariate weighted extropy. Meanwhile, Bansal and Gupta (2021) investigated the outcomes associated with weighted extropy and weighted residual extropy concerning order statistics and and k-record values. In this framework, the notion of general weighted extropy (GWE) with a non-negative weight function $w_1(x)\geq 0$ is defined as:
	\begin{align*}
		J^{w_1}(X)&=-\frac{1}{2} \int_{-\infty}^{\infty}w_1(x) f^2(x)dx\\
		&=-\frac{1}{2}E(\Lambda_X^{w_1}(U)),
	\end{align*}
	where $\Lambda_X^{w_1}(u)=w_1(F^{-1}(u))f(F^{-1}(u))$ and $U$ is uniformly distributed random variable on $(0,1)$, i.e., $U\sim$ Uniform$(0,1)$.
	
	An illustration provided by Bansal and Gupta (2021) and Gupta and Chaudhary (2023) demonstrates that while the extropies may be similar, there exists a difference between weighted extropies and general weighted extropies. Thus, both weighted extropies and general weighted extropies act as indicators of uncertainty. Unlike the extropy defined in Equation (\ref{extropy}), this measure, influenced by the shift, takes into account the values of the random variable.
	
	In their research, Qiu and Raqab (2022) introduced a formulation for the 
 weighted extropy linked to RSS, articulated in terms of quantile and density-quantile functions. They also provided relevant findings, covering monotonic properties, stochastic orders, characterizations, and precise boundaries. Additionally, the authors conducted a comparative examination between the weighted extropy of RSS and its counterpart in SRS. In their study, Gupta and Chaudhary (2023) examined the monotonic and stochastic traits of the GWE concerning RSS data. The investigation involved deriving stochastic comparison outcomes through diverse weightings for RSS extropy and included a comparative assessment with SRS data. The study also yielded characterization findings and explored the monotonic properties of the GWE associated with RSS data. Recently Sahu and Gupta (2024) study the stochastic properties of GWE
of ERSS data. By taking different weights for the extropy of ERSS they obtain stochastic comparison results. They also provide some characterization results.

         This manuscript focuses on investigating the stochastic characteristics of the GWE concerning PRSS data. Stochastic comparison outcomes are derived by employing various weights for the extropy of PRSS. A comparison between the extropy of PRSS and SRS data is conducted. Additionally, certain characterization findings are obtained.

	\section{Some results on GWE and related properties}
	Before presenting findings, let's examine definitions from scholarly sources (refer to Shaked and Shanthikumar (2007) and Misra, Gupta, and Dhariyal (2008)) of pertinent terminology.
	
	\begin{definition}
		Let a random variable $X$ have the probability density function (pdf) $f(x)$, the cummulative distribution function (cdf) $F(x)$ and the survival function (sf) $\bar{F}(x) = 1-F(x).$ Let $l_X = inf\{x \in \mathbb{R} : F(x) > 0\},\   u_X = sup\{x \in \mathbb{R} : F(x) < 1\}$ and	 $S_X= (l_X,u_X),$ where -$\infty \le l_X \le u_X \le \infty$.\\
		\\
		(i) $X$ is said to be log-concave (log-convex) if $\{x \in \mathbb{R} : f(x) > 0\} = S_X$
		and $ln(f(x))$ is concave (convex) on $S _X$.\\
		\\
		(ii) $X$ is said to have increasing (decreasing) failure rate IFR (DFR) if $\bar F(x)$ is log-concave		(log-convex) on  $S_X$.\\
		\\
		(iii) $X$ is said to have decreasing (increasing) reverse failure rate DRFR (IRFR) if $F(x)$ is	log-concave (log-convex) on $S_X$.\\
		\\
		(iv) $X$ is said to have decreasing (increasing) mean residual life DMRL (IMRL) if $\int_{x}^{u_X} \bar F(t)dt$		is log-concave (log-convex) on $S_X$.\\
		\\
		(v) $X$ is said to have increasing (decreasing) mean inactivity time (IMIT (DMIT)) if $\int_{l_X}^{x}	F(t)dt$
		is log-concave (log-convex) on $S_X.$
		
	\end{definition}

	\begin{definition}
		Let $X$ be a random variable with  pdf $f(x)$, cdf $F(x)$ and sf $\bar{F}(x)=1-F(x).$ Let $l_X = inf\{x \in \mathbb{R} : F(x) > 0\}$, $u_X = sup\{x \in \mathbb{R} : F(x) < 1\}$
		and $S_X = (l_X, u_X).$ Similarly, let $Y$ be a random variable with  pdf $g(x)$, cdf $G(x)$ and sf $\bar{G}(x)= 1-G(x).$ Let $l_Y=inf\{x \in R:G(x)> 0\},\  u_Y = sup\{x \in R :G(x) < 1\}$ and $S_Y = (l_Y,u_Y )$. If $l_X \ge 0$ and $l_Y \ge 0$, then \\
		\\
		(i) $X$ is said to be smaller than $Y$ in usual stochastic (st) ordering		$(X \le_{st} Y )$ if $\bar{F}(x) \le \bar{G}(x)$, for every -$\infty< x <\infty.$\\
		\\
		(ii) $X$ is said to be smaller than $Y$ in the likelihood ratio (lr) ordering $(X \le_{lr} Y )$ if
		$g(x)f(y) \le  f(x)g(y)$, whenever $-\infty < x < y <\infty$.\\
		\\
		(iv) $X$ is said to be smaller than $Y$ in the dispersive ordering ($X \le_{disp} Y)$ if $G^{-1}F(x)-x$ is increasing in $x \ge 0$.\\
		\\
		(v) $X$ is said to be smaller than $Y$ in the hazard rate ordering $(X \le_{hr} Y)$ if $\frac{\bar{G}(x)}{\bar{F}(x)}$  is increasing in $x\in S_X \cap S_Y.$
		
	\end{definition}

\section{\textbf{General Weighted Extropies of PRSS}}	
	Let $X$ be a random variable with finite mean $\mu$ and variance $\sigma^2$. For $\textbf{X}_{SRS}=\{X_i,\ i=1,\ldots,n\}$, the joint pdf is $\prod_{i=1}^{n}f(x_i)$, as $X_i$'s, $i=1,\ldots,n$ are independent and identically distributed (i.i.d.). Hence the general weighted extropy (GWE) of $\textbf{X}_{SRS}^{(n)}$ can be defined as
	\begin{align}\label{SRS1}
		J^{w_1}(\textbf{X}_{SRS}^{(n)})&=\frac{-1}{2}\prod_{i=1}^{n}\left(\int_{-\infty}^{\infty}w_1(x_i)f^2(x_i)dx_i\right)\nonumber \\
		&=\frac{-1}{2}\left(-2J^{w_1}(X)\right)^n\nonumber \\
		&=\frac{-1}{2}\left(E(\Lambda_X^{w_1}(U))\right)^n.
	\end{align}
 Let the pdf of $X_{(i,n)i}$ is
      \[f_{i:n}(x) =\frac{n!}{(i-1)!(n-i)!}F^{i-1}(x) \bar F^{n-i}(x) f(x),\  \infty<x<\infty.\]
      	Now, we can write the GWE of   $\textbf{X}_{PRSS}^{(n)}$  as
\begin{align}	
	& J^{w_1}(\textbf{X}_{PRSS}^{(n)})\nonumber \\ 
 &=\begin{cases}
-\frac{1}{2}\left(\prod_{i=1}^{n/2}\left(-2J^{w_1}(X_{([p(n+1)]:n)i})\right)\right)\left(\prod_{i=\frac{n}{2}+1}^{n}\left(-2J^{w_1}(X_{([q(n+1)]+1:n)i})\right) \right) \text{ if } \mbox{n\  even},\\
-\frac{1}{2}\left(\prod_{i=1}^{\frac{n-1}{2}}\left(-2J^{w_1}(X_{([p(n+1)]:n)i})\right)\right)\left(\prod_{i=\frac{n+1}{2}}^{n-1}\left(-2J^{w_1}(X_{([q(n+1)]:n)i})\right)\right) \left(-2J^{w_1}(X_{(\frac{n+1}{2}:n)n})\right) \text{ if } \mbox{n\  odd},
\end{cases}\nonumber \\
&=\begin{cases}
-\frac{1}{2}\left(\prod_{i=1}^{n/2}\left(\int_{-\infty}^{\infty}w_1(x)f_{[p(n+1)]:n}^2(x)dx)\right)\right)\left(\prod_{i=\frac{n}{2}+1}^{n}\left(\int_{-\infty}^{\infty}w_1(x)f_{[q(n+1)]:n}^2(x)dx\right) \right) \text{ if } \mbox{n\  even},\\
-\frac{1}{2}\left(\prod_{i=1}^{\frac{n-1}{2}}\left(\int_{-\infty}^{\infty}w_1(x)f_{[p(n+1)]:n}^2(x)dx)\right)\right)\left(\prod_{i=\frac{n+1}{2}}^{n-1}\left(\int_{-\infty}^{\infty}w_1(x)f_{[q(n+1)]:n}^2(x)dx)\right)\right)\left(\int_{-\infty}^{\infty}w_1(x)f_{\frac{n+1}{2}:n}^2(x)dx)\right) \text{ if } \mbox{n\  odd},
\end{cases}\nonumber \\
&=\begin{cases} \label{PRSS1}
-\frac{Q_{1,n}}{2}\left(E\left(\Lambda_X^{w_1} (B_{2a-1:2n-1})\right)\right)^{n/2}\left(E\left(\Lambda_X^{w_1} (B_{2b-1:2n-1})\right) \right)^{n/2} \text{ if } \mbox{n\  even},
\\
-\frac{Q_{2,n}}{2}\left(E\left(\Lambda_X^{w_1} (B_{2a-1:2n-1})\right)\right)^{\frac{n-1}{2}} \left(E\left(\Lambda_X^{w_1} (B_{2b-1:2n-1})\right)\right)^{\frac{n-1}{2}}\left(E\left(\Lambda_X^{w_1} (B_{n:2n-1})\right)\right) \text{ if } \mbox{n\  odd},
\end{cases}
\end{align}
	where $a=[p(n+1)]$, $p\in (0,\frac{1}{2})\cup (\frac{1}{2},1)$, $b=[q(n+1)]$, and throughout the paper the notation $ [ . ] $ denotes the number rounded to the nearest integer; here
 \begin{align*}
 Q_{1,n}&= n^{n}\prod_{i=1}^{\frac{n}{2}}C_{a,n} \prod_{i=\frac{n}{2}+1}^{n}C_{b,n}, \  
		Q_{2,n}=n^{n}\prod_{i=1}^{\frac{n-1}{2}}C_{a,n} \prod_{i=\frac{n+1}{2}}^{n-1}C_{b,n} \frac{n((n-1)!)^{4}}{((\frac{n-1}{2})!)^{4}(2n-1)!},\\  C_{a,n}&=\frac{\binom{2a-2}{a-1}\binom{2n-2a}{n-a}}{\binom{2n-1}{n-1}}, \   C_{b,n}=\frac{\binom{2b-2}{b-1}\binom{2n-2b}{n-b}}{\binom{2n-1}{n-1}}, 
  \end{align*}
	and $B_{2i-1:2n-1}$ is a beta distributed random variable, with parameters $(2i-1)$ and $(2n-2i+1)$, having pdf
 \[\phi_{2i-1:2n-1}(u)=\frac{(2n-1)!}{(2i-2)!(2n-2i)!} u^{2i-2}(1-u)^{2n-2i}, 0<u<1.\] Equation $(\ref{PRSS1})$ provides an expression in simplified form of the GWE of $\textbf{X}_{PRSS}^{(n)}$.
 Now we provide some examples to illustrate the equation $(\ref{PRSS1})$.
	
	\begin{example}
		Let $V$ be a random variable with power distribution. The pdf and cdf of $V$ are respectively $f(x)= \theta x^{\theta -1}$ 
		and $F(x)= x^{\theta}$ , $0<x<1$ ,  $\theta > 0$. Let $w_1(x)=x^m,\ x>0, \ m>0$, then it follows that 
        \begin{align*}
            \Lambda_V^{w_1}(u)= w_1(F^{-1}(u)) f(F^{-1}(u))=\theta u^{\frac{m + \theta - 1}{\theta}},
        \end{align*}
        for $ w_1(x)=x^m.$ 
		Then we have
		\begin{align*}	
	& J^{w_1}(\textbf{V}_{PRSS}^{(n)})\nonumber \\ 
 &=\begin{cases} 
-\frac{Q_{1,n}}{2}\left(E\left(\Lambda_X^{w_1} (B_{2a-1:2n-1})\right)\right)^{n/2}\left(E\left(\Lambda_X^{w_1} (B_{2b-1:2n-1})\right) \right)^{n/2} \text{ if } \mbox{n\  even},
\\
-\frac{Q_{2,n}}{2}\left(E\left(\Lambda_X^{w_1} (B_{2a-1:2n-1})\right)\right)^{\frac{n-1}{2}} \left(E\left(\Lambda_X^{w_1} (B_{2b-1:2n-1})\right)\right)^{\frac{n-1}{2}}\left(E\left(\Lambda_X^{w_1} (B_{n:2n-1})\right)\right) \text{ if } \mbox{n\  odd},
\end{cases}\nonumber\\ 
 &=\begin{cases}
-\frac{Q_{1,n}}{2}\left(\prod_{i=1}^{n/2} \int_{0}^{1}\left(\Lambda_V^{w_1}(u) \frac{(2n-1)!}{(2a-2)!(2n-2a)!}u^{2a-2}(1-u)^{2n-2a}du\right)\right) \nonumber \\ \ \ \ \ \ \ \ \ \ \ \ \ \ \ \ \ \ \ \ \ \ \ \ \ \ \ \ \ \ \ \ \ \ \ \ \ \ \  \left(\prod_{i=\frac{n}{2}+1}^{n}\int_{0}^{1}\left(\Lambda_V^{w_1}(u) \frac{(2n-1)!}{(2b-2)!(2n-2b)!}u^{2b-2} (1-u)^{2n-2b}du\right) \right) \text{ if } \mbox{n\  even},\\
-\frac{Q_{2,n}}{2}\left(\prod_{i=1}^{\frac{n-1}{2}} \int_{0}^{1}\left(\Lambda_V^{w_1}(u) \frac{(2n-1)!}{(2a-2)!(2n-2a)!}u^{2a-2}(1-u)^{2n-2a}du\right)\right)  \nonumber \\ \ \ \left(\prod_{i=\frac{n+1}{2}}^{n-1}\int_{0}^{1}\left(\Lambda_V^{w_1}(u) \frac{(2n-1)!}{(2b-2)!(2n-2b)!}u^{2b-2}(1-u)^{2n-2b}du\right) \right)\left( \int_{0}^{1} \Lambda_V^{w_1}(u) \frac{(2n-1)!}{((n-1)!)^{2}} u^{(n-1)}(1-u)^{(n-1)} du\right)\text{ if } \mbox{n\  odd}.
\end{cases}\nonumber \\
&=\begin{cases}
-\frac{Q_{1,n}}{2}\left(\prod_{i=1}^{n/2} \int_{0}^{1}\left(\theta u^{\frac{m + \theta - 1}{\theta}} \frac{(2n-1)!}{(2a-2)!(2n-2a)!}u^{2a-2}(1-u)^{2n-2a}du\right)\right) \nonumber \\ \ \ \ \ \ \ \ \ \ \ \ \ \ \ \ \ \ \ \ \ \ \ \ \ \ \ \ \ \ \ \ \ \ \ \ \ \ \  \left(\prod_{i=\frac{n}{2}+1}^{n}\int_{0}^{1}\left(\theta u^{\frac{m + \theta - 1}{\theta}} \frac{(2n-1)!}{(2b-2)!(2n-2b)!}u^{2b-2} (1-u)^{2n-2b}du\right) \right) \text{ if } \mbox{n\  even},\\
-\frac{Q_{2,n}}{2}\left(\prod_{i=1}^{\frac{n-1}{2}} \int_{0}^{1}\left(\theta u^{\frac{m + \theta - 1}{\theta}} \frac{(2n-1)!}{(2a-2)!(2n-2a)!}u^{2a-2}(1-u)^{2n-2a}du\right)\right)  \nonumber \\ \  \left(\prod_{i=\frac{n+1}{2}}^{n-1}\int_{0}^{1}\left(\theta u^{\frac{m + \theta - 1}{\theta}} \frac{(2n-1)!}{(2b-2)!(2n-2b)!}u^{2b-2}(1-u)^{2n-2b}du\right) \right) \left( \int_{0}^{1} \theta u^{\frac{m + \theta - 1}{\theta}} \frac{(2n-1)!}{((n-1)!)^{2}} u^{(n-1)}(1-u)^{(n-1)} du\right)\text{ if } \mbox{n\  odd}.
\end{cases}\nonumber\end{align*}

\begin{align*}
&=\begin{cases}
-\frac{Q_{1,n}}{2} \theta^{n}\left(\prod_{i=1}^{n/2} \int_{0}^{1}\left( u^{\frac{m + \theta - 1}{\theta}} \frac{(2n-1)!}{(2a-2)!(2n-2a)!}u^{2a-2}(1-u)^{2n-2a}du\right)\right)  \nonumber \\ \ \ \ \ \ \ \ \ \ \ \ \ \ \ \ \ \ \ \ \ \ \ \ \ \ \ \ \ \ \ \ \ \ \ \ \ \ \ \left(\prod_{i=\frac{n}{2}+1}^{n}\int_{0}^{1}\left(u^{\frac{m + \theta - 1}{\theta}} \frac{(2n-1)!}{(2b-2)!(2n-2b)!}u^{2b-2} (1-u)^{2n-2b}du\right) \right) \text{ if } \mbox{n\  even},\\
-\frac{Q_{2,n}}{2} \theta^{n}\left(\prod_{i=1}^{\frac{n-1}{2}} \int_{0}^{1}\left( u^{\frac{m + \theta - 1}{\theta}} \frac{(2n-1)!}{(2a-2)!(2n-2a)!}u^{2a-2}(1-u)^{2n-2a}du\right)\right)  \nonumber \\ \  \left(\prod_{i=\frac{n+1}{2}}^{n-1}\int_{0}^{1}\left( u^{\frac{m + \theta - 1}{\theta}} \frac{(2n-1)!}{(2b-2)!(2n-2b)!}u^{2b-2}(1-u)^{2n-2b}du\right) \right)  \left( \int_{0}^{1}  u^{\frac{m + \theta - 1}{\theta}} \frac{(2n-1)!}{((n-1)!)^{2}} u^{(n-1)}(1-u)^{(n-1)} du\right)\text{ if } \mbox{n\  odd}.
\end{cases}\nonumber\\
&=\begin{cases}
-\frac{Q_{1,n}}{2} \theta^{n}\left(\prod_{i=1}^{n/2} \left(  \frac{(2n-1)!\Gamma(\frac{m+2a\theta-1}{\theta})}{(2a-2)!\Gamma(\frac{m+(2n+1)\theta-1}{\theta})}\right)\right)  \left(\prod_{i=\frac{n}{2}+1}^{n}\left( \frac{(2n-1)!\Gamma(\frac{m+2b\theta-1}{\theta})}{(2b-2)!\Gamma(\frac{m+(2n+1)\theta-1}{\theta})} \right) \right) \text{ if } \mbox{n\  even},\\
-\frac{Q_{2,n}}{2} \theta^{n}\left(\prod_{i=1}^{\frac{n-1}{2}} \left( \frac{(2n-1)!\Gamma(\frac{m+2a\theta-1}{\theta})}{(2a-2)!\Gamma(\frac{m+(2n+1)\theta-1}{\theta})}\right)\right) \left(\prod_{i=\frac{n+1}{2}}^{n-1}\left( \frac{(2n-1)!\Gamma(\frac{m+2b\theta-1}{\theta})}{(2b-2)!\Gamma(\frac{m+(2n+1)\theta-1}{\theta})}\right) \right) \left( \frac{(2n-1)!\Gamma(\frac{m+(n+1)\theta-1}{\theta})}{((n-1)!) \Gamma(\frac{m+(2n+1)\theta-1}{\theta})}\right)\text{ if } \mbox{n\  odd}.
\end{cases}
\end{align*} \hfill $\blacksquare$
	\end{example}
 
	\begin{example}
		Let $W$ have an exponential distribution with cdf $F_W(w)=1-e^{-\lambda w}, \ \lambda >0, \ w>0$. Let $w_1(x)=x^m, \ m>0, \ x>0$, then it follows that
		\begin{align*}
	\Lambda_W^{w_1} (u) = w_1(F^{-1}(u)) f(F^{-1}(u))
		= \frac{(-1)^m (1-u) (ln(1-u))^m}{\lambda^{m-1}}, 0<u<1.	
		\end{align*}
		Then we have 
		\begin{align*}
  & J^{w_1}(\textbf{W}_{PRSS}^{(n)})\nonumber \\ 
 &=\begin{cases} 
-\frac{Q_{1,n}}{2}\left(E\left(\Lambda_X^{w_1} (B_{2a-1:2n-1})\right)\right)^{n/2}\left(E\left(\Lambda_X^{w_1} (B_{2b-1:2n-1})\right) \right)^{n/2} \text{ if } \mbox{n\  even},
\\
-\frac{Q_{2,n}}{2}\left(E\left(\Lambda_X^{w_1} (B_{2a-1:2n-1})\right)\right)^{\frac{n-1}{2}} \left(E\left(\Lambda_X^{w_1} (B_{2b-1:2n-1})\right)\right)^{\frac{n-1}{2}}\left(E\left(\Lambda_X^{w_1} (B_{n:2n-1})\right)\right) \text{ if } \mbox{n\  odd},
\end{cases}\nonumber \\
 &=\begin{cases}
-\frac{Q_{1,n}}{2}\left(\prod_{i=1}^{n/2} \int_{0}^{1}\left(\Lambda_V^{w_1}(u) \frac{(2n-1)!}{(2a-2)!(2n-2a)!}u^{2a-2}(1-u)^{2n-2a}du\right)\right)  \nonumber \\ \ \ \ \ \ \ \ \ \ \ \ \ \ \ \ \ \ \ \ \ \ \ \ \ \ \ \ \ \ \ \ \ \ \ \ \ \ \ \left(\prod_{i=\frac{n}{2}+1}^{n}\int_{0}^{1} 
 \left(\Lambda_V^{w_1}(u) \frac{(2n-1)!}{(2b-2)!(2n-2b)!}u^{2b-2} (1-u)^{2n-2b}du\right) \right) \text{ if } \mbox{n\  even},\\
-\frac{Q_{2,n}}{2}\left(\prod_{i=1}^{\frac{n-1}{2}} \int_{0}^{1}\left(\Lambda_V^{w_1}(u) \frac{(2n-1)!}{(2a-2)!(2n-2a)!}u^{2a-2}(1-u)^{2n-2a}du\right)\right)  \nonumber \\ \  \left(\prod_{i=\frac{n+1}{2}}^{n-1}\int_{0}^{1}\left(\Lambda_V^{w_1}(u) \frac{(2n-1)!}{(2b-2)!(2n-2b)!}u^{2b-2}(1-u)^{2n-2b}du\right) \right) \left( \int_{0}^{1} \Lambda_V^{w_1}(u) \frac{(2n-1)!}{((n-1)!)^{2}} u^{(n-1)}(1-u)^{(n-1)} du\right)\text{ if } \mbox{n\  odd}.
\end{cases}\nonumber \\
 &=\begin{cases}
-\frac{Q_{1,n}}{2}\left(\prod_{i=1}^{n/2} \int_{0}^{1}\frac {(-1)^{m}(1-u)(ln(1-u))^{m}}{\lambda^{m-1}} \frac{(2n-1)!}{(2a-2)!(2n-2a)!}u^{2a-2}(1-u)^{2n-2a}du\right)  \nonumber \\ \ \ \ \ \ \ \ \ \ \ \ \ \ \ \ \ \ \ \ \ \ \ \ \ \ \ \ \ \ \ \ \ \ \ \ \ \ \ \left(\prod_{i=\frac{n}{2}+1}^{n}\int_{0}^{1} \frac {(-1)^{m}(1-u)(ln(1-u))^{m}}{\lambda^{m-1}} \frac{(2n-1)!}{(2b-2)!(2n-2b)!}u^{2b-2}(1-u)^{2n-2b}du \right) \text{ if } \mbox{n\  even},\\
-\frac{Q_{2,n}}{2}\left(\prod_{i=1}^{\frac{n-1}{2}} \int_{0}^{1}\frac {(-1)^{m}(1-u)(ln(1-u))^{m}}{\lambda^{m-1}} \frac{(2n-1)!}{(2a-2)!(2n-2a)!}u^{2a-2}(1-u)^{2n-2a}du\right)  \nonumber \\ \ \left(\prod_{i=\frac{n+1}{2}}^{n-1}\int_{0}^{1}\frac {(-1)^{m}(1-u)(ln(1-u))^{m}}{\lambda^{m-1}} \frac{(2n-1)!}{(2b-2)!(2n-2b)!}u^{2b-2}(1-u)^{2n-2b}du\right)  \nonumber \\ \ \ \ \ \ \ \ \ \ \ \ \ \ \ \ \ \ \ \ \ \ \ \ \ \ \ \ \ \ \ \ \ \ \ \ \ \ \ \left( \int_{0}^{1} \frac {(-1)^{m}(1-u)(ln(1-u))^{m}}{\lambda^{m-1}} \frac{(2n-1)!}{((n-1)!)^{2}} u^{(n-1)}(1-u)^{(n-1)} du\right)\text{ if } \mbox{n\  odd}.
\end{cases}\nonumber \\
		\end{align*}
		Taking $u=1-e^{-x}$ in the above equation, we get
		\begin{align*}
			& J^{w_1}(\textbf{W}_{PRSS}^{(n)})\\ 
			&=\begin{cases}
-\frac{Q_{1,n}}{2}\left(\prod_{i=1}^{n/2} \frac{(-1)^{m}}{\lambda^{m-1}} \int_{0}^{\infty} e^{-x} (ln( e^{-x}))^{m}\frac{(2n-1)!}{(2a-2)!(2n-2a)!}(1-e^{-x})^{2a-2}(e^{-x})^{2n-2a} e^{-x}dx\right) \nonumber \\ \  \left(\prod_{i=\frac{n}{2}+1}^{n} \frac{(-1)^{m}}{\lambda^{m-1}} \int_{0}^{\infty}e^{-x}(ln(e^{-x}))^{m} \frac{(2n-1)!}{(2b-2)!(2n-2b)!}(1-e^{-x})^{2b-2}(e^{-x})^{2n-2b} e^{-x}dx \right) \text{ if } \mbox{n\  even},\\
-\frac{Q_{2,n}}{2}\left(\prod_{i=1}^{\frac{n-1}{2}} \frac{(-1)^{m}}{\lambda^{m-1}} \int_{0}^{\infty} e^{-x} (ln( e^{-x}))^{m}\frac{(2n-1)!}{(2a-2)!(2n-2a)!}(1-e^{-x})^{2a-2}(e^{-x})^{2n-2a} e^{-x}dx\right) \nonumber \\ \  \left(\prod_{i=\frac{n+1}{2}}^{n-1}\frac{(-1)^{m}}{\lambda^{m-1}} \int_{0}^{\infty}e^{-x}(ln(e^{-x}))^{m} \frac{(2n-1)!}{(2b-2)!(2n-2b)!}(1-e^{-x})^{2b-2}(e^{-x})^{2n-2b} e^{-x}dx  \right) \nonumber \\ \ \  \left( \int_{0}^{\infty} \frac {(-1)^{m} e^{-x}(ln(e^{-x}))^{m}}{\lambda^{m-1}} \frac{(2n-1)!}{((n-1)!)^{2}} (1-e^{-x})^{(n-1)}(e^{-x})^{(n-1)} e^{-x} dx\right)\text{ if } \mbox{n\  odd}.
\end{cases}\nonumber \\
&=\begin{cases}
-\frac{Q_{1,n}}{2} \frac{1}{\lambda^{n(m-1)}}\left(\prod_{i=1}^{n/2} \int_{0}^{\infty} x^{m} \frac{(2n-1)!}{(2a-2)!(2n-2a)!}(1-e^{-x})^{2a-2}(e^{-x})^{2n-2a+2} dx\right)  \nonumber \\ \ \ \ \ \ \ \ \ \ \ \ \ \ \ \ \ \ \ \ \ \ \ \ \ \ \ \ \ \ \ \ \ \ \ \ \ \ \ \left(\prod_{i=\frac{n}{2}+1}^{n} \int_{0}^{\infty} x^{m} \frac{(2n-1)!}{(2b-2)!(2n-2b)!}(1-e^{-x})^{2b-2}(e^{-x})^{2n-2b+2} dx \right) \text{ if } \mbox{n\  even},\\
-\frac{Q_{2,n}}{2} \frac{1}{\lambda^{n(m-1)}}\left(\prod_{i=1}^{\frac{n-1}{2}} \int_{0}^{\infty} x^{m} \frac{(2n-1)!}{(2a-2)!(2n-2a)!}(1-e^{-x})^{2a-2}(e^{-x})^{2n-2a+2} dx\right)  \nonumber \\ \ \left(\prod_{i=\frac{n+1}{2}}^{n-1}\int_{0}^{\infty} x^{m} \frac{(2n-1)!}{(2b-2)!(2n-2b)!}(1-e^{-x})^{2b-2}(e^{-x})^{2n-2b+2} dx \right) \left( \int_{0}^{\infty} \frac{1}{2} x^{m} \frac {(2n)!}{(n-1)!(n!)} (1-e^{-x})^{n-1} (e^{-x})^{n+1}dx\right)\text{ if } \mbox{n\  odd}.
\end{cases}\nonumber \\
&=\begin{cases}
-\frac{Q_{1,n}}{2} \frac{1}{\lambda^{n(m-1)}}\left(\prod_{i=1}^{n/2} \frac{(2n-2a+1)}{2n}\int_{0}^{\infty} x^{m} \frac{(2n)!}{(2a-2)!(2n-2a+1)!} (1-e^{-x})^{2a-2}(e^{-x})^{2n-2a+2} dx\right)  \nonumber \\ \ \ \ \ \ \ \ \ \ \ \ \ \ \ \ \ \ \ \ \ \ \ \ \ \ \ \ \ \ \ \ \ \ \ \ \ \ \ \left(\prod_{i=\frac{n}{2}+1}^{n} \frac{(2n-2b+1)}{2n}\int_{0}^{\infty} x^{m} \frac{(2n)!}{(2b-2)!(2n-2b+1)!} (1-e^{-x})^{2b-2}(e^{-x})^{2n-2b+2} dx \right) \text{ if } \mbox{n\  even},\\
-\frac{Q_{2,n}}{2} \frac{1}{\lambda^{n(m-1)}}\left(\prod_{i=1}^{\frac{n-1}{2}}\frac{(2n-2a+1)}{2n}\int_{0}^{\infty} x^{m} \frac{(2n)!}{(2a-2)!(2n-2a+1)!} (1-e^{-x})^{2a-2}(e^{-x})^{2n-2a+2} dx\right)   \nonumber \\ \  \left(\prod_{i=\frac{n+1}{2}}^{n-1}\frac{(2n-2b+1)}{2n}\int_{0}^{\infty} x^{m} \frac{(2n)!}{(2b-2)!(2n-2b+1)!} (1-e^{-x})^{2b-2}(e^{-x})^{2n-2b+2} dx\right)   \nonumber \\ \ \ \ \ \ \ \ \ \ \ \ \ \ \ \ \ \ \ \ \ \ \ \ \ \ \ \ \ \ \ \ \ \ \ \ \ \ \ \left( \int_{0}^{\infty} \frac{1}{2} x^{m} \frac {(2n)!}{(n-1)!(n!)} (1-e^{-x})^{n-1} (e^{-x})^{n+1}dx\right)\text{ if } \mbox{n\  odd}.
\end{cases}\nonumber \\
&=\begin{cases}
-\frac{Q_{1,n}(2n-1)!!}{2^{n+1}n^{n}} \frac{1}{\lambda^{n(m-1)}}\left(\prod_{i=1}^{n/2} E(W_{2a-1:2n}^{m})\right)\left(\prod_{i=\frac{n}{2}+1}^{n} E(W_{2b-1:2n}^{m}) \right) \text{ if } \mbox{n\  even},\\
-\frac{Q_{2,n}(2n-1)!!}{2^{n+1}n^{n-1}} \frac{1}{\lambda^{n(m-1)}}\left(\prod_{i=1}^{\frac{n-1}{2}} E(W_{2a-1:2n}^{m})\right) \left(\prod_{i=\frac{n+1}{2}}^{n-1}E(W_{2b-1:2n}^{m}) \right) \left( E(W_{n:2n}^{m})\right)\text{ if } \mbox{n\  odd}.
\end{cases}\nonumber \\
		\end{align*}
		where $W_{2i-1:2n}$ is the $(2i-1)$-th order statistics of a sample of size $2n$ from exponential distribution having pdf given by 
		$$\psi_{2i-1:2n}=\frac{(2n)!}{(2i-2)!(2n-2i+1)!}(1-e^{-x})^{2i-2}(e^{-x})^{2n-2i+2},\  x\ge 0,$$ 
		and  $(2n-1)!!= (2n-2a+1)^{\frac{n}{2}}(2n-2b+1)^{\frac{n}{2}} $ if $n$ is even and $(2n-1)!!= (2n-2a+1)^{\frac{n-1}{2}}(2n-2b+1)^{\frac{n-1}{2}} $if $n$ is odd.\hfill $\blacksquare$
	\end{example}
	
	\begin{example}
		Let $U$ be a Pareto random variable with cdf $F(x)=1-x^{-\alpha},\  \alpha >0, \ x>1 $. Let $w_1(x)=x^m,\ m>0, \ x>0 $, then we get
		\begin{align*}
			\Lambda_U^{w_1} (u) 
			&= w_1(F^{-1}(u)) f(F^{-1}(u))\\
			&=\alpha (1-u)^{\frac{\alpha - m+1}{\alpha}}.		
		\end{align*}
		The weighted extropy of $\textbf{U}_{PRSS}^{(n)}$ is 
		\begin{align*}
			& J^{w_1}(\textbf{U}_{PRSS}^{(n)}) \\
			 &=\begin{cases} 
-\frac{Q_{1,n}}{2}\left(E\left(\Lambda_X^{w_1} (B_{2a-1:2n-1})\right)\right)^{n/2}\left(E\left(\Lambda_X^{w_1} (B_{2b-1:2n-1})\right) \right)^{n/2} \text{ if } \mbox{n\  even},
\\
-\frac{Q_{2,n}}{2}\left(E\left(\Lambda_X^{w_1} (B_{2a-1:2n-1})\right)\right)^{\frac{n-1}{2}} \left(E\left(\Lambda_X^{w_1} (B_{2b-1:2n-1})\right)\right)^{\frac{n-1}{2}}\left(E\left(\Lambda_X^{w_1} (B_{n:2n-1})\right)\right) \text{ if } \mbox{n\  odd},
\end{cases}\nonumber \\
 &=\begin{cases}
-\frac{Q_{1,n}}{2}\left(\prod_{i=1}^{n/2} \int_{0}^{1}\left(\Lambda_V^{w_1}(u) \frac{(2n-1)!}{(2a-2)!(2n-2a)!}u^{2a-2}(1-u)^{2n-2a}du\right)\right)  \nonumber \\ \ \ \ \ \ \ \ \ \ \ \ \ \ \ \ \ \ \ \ \ \ \ \ \ \ \ \ \ \ \ \ \ \ \ \ \ \ \  \left(\prod_{i=\frac{n}{2}+1}^{n}\int_{0}^{1}\left(\Lambda_V^{w_1}(u) \frac{(2n-1)!}{(2b-2)!(2n-2b)!}u^{2b-2} (1-u)^{2n-2b}du\right) \right) \text{ if } \mbox{n\  even},\\
-\frac{Q_{2,n}}{2}\left(\prod_{i=1}^{\frac{n-1}{2}} \int_{0}^{1}\left(\Lambda_V^{w_1}(u) \frac{(2n-1)!}{(2a-2)!(2n-2a)!}u^{2a-2}(1-u)^{2n-2a}du\right)\right)  \nonumber \\ \ \left(\prod_{i=\frac{n+1}{2}}^{n-1}\int_{0}^{1}\left(\Lambda_V^{w_1}(u) \frac{(2n-1)!}{(2b-2)!(2n-2b)!}u^{2b-2}(1-u)^{2n-2b}du\right) \right)  \left( \int_{0}^{1} \Lambda_V^{w_1}(u) \frac{(2n-1)!}{((n-1)!)^{2}} u^{(n-1)}(1-u)^{(n-1)} du\right)\text{ if } \mbox{n\  odd}.
\end{cases}\nonumber\\
 &=\begin{cases}
-\frac{Q_{1,n}}{2}\left(\prod_{i=1}^{n/2} \int_{0}^{1}\left(\alpha (1-u)^{\frac{\alpha - m+1}{\alpha}} \frac{(2n-1)!}{(2a-2)!(2n-2a)!}u^{2a-2}(1-u)^{2n-2a}du\right)\right)  \nonumber \\ \ \ \ \ \ \ \ \ \ \ \ \ \ \ \ \ \ \ \ \ \ \ \ \ \ \ \ \ \ \ \ \ \ \ \ \ \ \ \left(\prod_{i=\frac{n}{2}+1}^{n}\int_{0}^{1}\left(\alpha (1-u)^{\frac{\alpha - m+1}{\alpha}} \frac{(2n-1)!}{(2b-2)!(2n-2b)!}u^{2b-2} (1-u)^{2n-2b}du\right) \right) \text{ if } \mbox{n\  even},\\
-\frac{Q_{2,n}}{2}\left(\prod_{i=1}^{\frac{n-1}{2}} \int_{0}^{1} \left(\alpha (1-u)^{\frac{\alpha - m+1}{\alpha}} \frac{(2n-1)!}{(2a-2)!(2n-2a)!}u^{2a-2}(1-u)^{2n-2a}du\right)\right)  \nonumber \\ \  \left(\prod_{i=\frac{n+1}{2}}^{n-1}\int_{0}^{1}\left(\alpha (1-u)^{\frac{\alpha - m+1}{\alpha}} \frac{(2n-1)!}{(2b-2)!(2n-2b)!}u^{2b-2}(1-u)^{2n-2b}du\right) \right)  \nonumber \\ \ \ \ \ \ \ \ \ \ \ \ \ \ \ \ \ \ \ \ \ \ \ \ \ \ \ \ \ \ \ \ \ \ \ \ \ \ \ \left( \int_{0}^{1} \alpha (1-u)^{\frac{\alpha - m+1}{\alpha}} \frac{(2n-1)!}{((n-1)!)^{2}} u^{(n-1)}(1-u)^{(n-1)} du\right)\text{ if } \mbox{n\  odd}.
\end{cases}\nonumber \\
&=\begin{cases}
-\frac{Q_{1,n}}{2} \alpha^{n}\left(\prod_{i=1}^{n/2} \int_{0}^{1}\left(  \frac{(2n-1)!}{(2a-2)!(2n-2a)!}u^{2a-2}(1-u)^{2n-2a+\frac{\alpha - m+1}{\alpha}}du\right)\right)  \nonumber \\ \ \ \ \ \ \ \ \ \ \ \ \ \ \ \ \ \ \ \ \ \ \ \ \ \ \ \ \ \ \ \ \ \ \ \ \ \ \  \left(\prod_{i=\frac{n}{2}+1}^{n}\int_{0}^{1}\left( \frac{(2n-1)!}{(2b-2)!(2n-2b)!}u^{2b-2} (1-u)^{2n-2b+\frac{\alpha - m+1}{\alpha}}du\right) \right) \text{ if } \mbox{n\  even},\\
-\frac{Q_{2,n}}{2}\alpha^{n}\left(\prod_{i=1}^{n/2} \int_{0}^{1}\left(  \frac{(2n-1)!}{(2a-2)!(2n-2a)!}u^{2a-2}(1-u)^{2n-2a+\frac{\alpha - m+1}{\alpha}}du\right)\right)  \nonumber \\ \ \left(\prod_{i=\frac{n}{2}+1}^{n}\int_{0}^{1}\left( \frac{(2n-1)!}{(2b-2)!(2n-2b)!}u^{2b-2} (1-u)^{2n-2b+\frac{\alpha - m+1}{\alpha}}du\right) \right) \left( \int_{0}^{1} \frac{(2n-1)!}{((n-1)!)^{2}} u^{(n-1)}(1-u)^{(n-1+\frac{\alpha - m+1}{\alpha})} du\right)\text{ if } \mbox{n\  odd}.
\end{cases}\nonumber \\
&=\begin{cases}
-\frac{Q_{1,n}}{2} \alpha^{n}\left(\prod_{i=1}^{n/2} \left(  \frac{(2n-1)!}{(2n-2a)!}\frac{\Gamma(\frac{2n\alpha-2a\alpha+2\alpha-m+1}{\alpha})}{\Gamma(\frac{\alpha+2n\alpha-m+1}{\alpha})}\right)\right)  \left(\prod_{i=\frac{n}{2}+1}^{n}\left( \frac{(2n-1)!}{(2n-2b)!}\frac{\Gamma(\frac{2n\alpha-2b\alpha+2\alpha-m+1}{\alpha})}{\Gamma(\frac{\alpha+2n\alpha-m+1}{\alpha})}\right) \right) \text{ if } \mbox{n\  even},\\
-\frac{Q_{2,n}}{2}\alpha^{n}\left(\prod_{i=1}^{n/2} \left(  \frac{(2n-1)!}{(2n-2a)!}\frac{\Gamma(\frac{2n\alpha-2a\alpha+2\alpha-m+1}{\alpha})}{\Gamma(\frac{\alpha+2n\alpha-m+1}{\alpha})}\right)\right)  \left(\prod_{i=\frac{n}{2}+1}^{n}\int_{0}^{1}\left( \frac{(2n-1)!}{(2n-2b)!}\frac{\Gamma(\frac{2n\alpha-2b\alpha+2\alpha-m+1}{\alpha})}{\Gamma(\frac{\alpha+2n\alpha-m+1}{\alpha})}\right) \right) \nonumber \\ \ \ \ \ \ \ \ \ \ \ \ \ \ \ \ \ \ \ \ \ \ \ \ \ \ \ \ \ \ \ \ \ \ \ \ \ \ \ \left(  \frac{(2n-1)!}{((n-1)!)} \frac{\Gamma(\frac{\alpha-m+1}{\alpha}+n)}{\Gamma(\frac{(2n+1)\alpha-m+1}{\alpha})}\right)\text{ if } \mbox{n\  odd}.
\end{cases}\nonumber \\
		\end{align*}\hfill $\blacksquare$

	\end{example}

		The following result gives the conditions under which the GWE will increase (decrease).	
		
		\begin{theorem}\label{new1}
			Let $X$ be a non-negative absolutely continuous random variable with pdf f and cdf F. Assume $\eta(x)$ is an increasing function and  $\frac{w_1(\eta(x))}{\eta^\prime (x)} \leq (\geq) w_1(x)$ and $\eta(0)=0$. If $V=\eta(X)$, then $J^{w_1}(\textbf{X}_{PRSS}^{(n)})\leq (\geq) J^{w_1}(\textbf{V}_{PRSS}^{(n)})$.

		\end{theorem}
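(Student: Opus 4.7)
The plan is to reduce the inequality to a factor-by-factor comparison inside the product form of equation~(\ref{PRSS1}). Each factor there is, up to constants that depend only on $n$, $a$, $b$, an integral of the type $\int_0^\infty w_1(x)f_{k:n}^2(x)\,dx$ for $k\in\{[p(n+1)],[q(n+1)],(n+1)/2\}$. Since $Q_{1,n},Q_{2,n}>0$ are distribution-free and the outer prefactor $-1/2$ is the same for $X$ and $V$, showing that each such factor computed for $V$ is bounded above (respectively, below) by the one for $X$ will, after multiplying by the negative prefactor, yield the desired inequality in the opposite direction.

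First I would use that $\eta$ is strictly increasing with $\eta(0)=0$ and $X\ge 0$ to get the distributional identity $V_{(k:n)i}\stackrel{d}{=}\eta(X_{(k:n)i})$ for every $k$ and $i$, and deduce that the $k$-th order-statistic pdf of $V$ is $f_{k:n}(\eta^{-1}(v))/\eta'(\eta^{-1}(v))$. Performing the substitution $v=\eta(x)$, which preserves the interval $[0,\infty)$ precisely because $\eta(0)=0$, in
\[
\int_0^\infty w_1(v)\left[\frac{f_{k:n}(\eta^{-1}(v))}{\eta'(\eta^{-1}(v))}\right]^2 dv
\]
reduces it to $\int_0^\infty \frac{w_1(\eta(x))}{\eta'(x)}\,f_{k:n}^2(x)\,dx$. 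The hypothesis $w_1(\eta(x))/\eta'(x)\le w_1(x)$ (respectively $\ge$) then gives the desired pointwise bound against $\int_0^\infty w_1(x)f_{k:n}^2(x)\,dx$ at each point, hence after integration.

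The last step would be to propagate this across all the factors in~(\ref{PRSS1}). Since every factor is non-negative, the inequality for individual factors survives taking a product, and multiplying the resulting product inequality by the negative number $-Q_{1,n}/2$ or $-Q_{2,n}/2$ reverses the direction to give $J^{w_1}(\textbf{X}_{PRSS}^{(n)})\le J^{w_1}(\textbf{V}_{PRSS}^{(n)})$ (respectively $\ge$). I do not anticipate a serious obstacle; the two points to verify carefully are that $\eta(0)=0$ is used precisely to preserve the integration domain after substitution, and that $Q_{1,n},Q_{2,n}$ depend only on $n,a,b$ so that they coincide for $X$ and $V$, which is clear from their explicit form.
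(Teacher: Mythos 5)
Your argument is correct and is essentially the proof the paper intends: the paper itself only cites Theorem 3.1 of Gupta and Chaudhary (2023), and that argument is exactly your factor-wise change of variables $v=\eta(x)$ applied to each integral $\int_0^\infty w_1(x)f_{k:n}^2(x)\,dx$ in the product representation, using $\eta(0)=0$ to preserve the domain and the nonnegativity of $w_1$ to pass the inequality through the product before multiplying by $-\tfrac{1}{2}$.
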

		\begin{proof}
			The proof is on similar lines as the proof of theorem 3.1 of Gupta and Chaudhary (2023).
		\end{proof}
		
		\begin{example}
			Let $Z$ have an exponential distribution with cdf $F_Z(z)=1-e^{-\lambda z}, \ \lambda >0, \ z>0$. Let $w_1(x)=x^{2}>0$. Consider $\eta(x)=e^x-1,\ x \geq 0$. Then $\eta (Z)$
			is Pareto distribution (see  Qiu and Raqab (2022) Example 2.7) with survival function $\bar F_{\eta (Z)}(x)=1/(1+x)^\lambda$, $x\geq 0$. Note that 
			\begin{align*}
				\frac{w_1(\eta(x))}{\eta^\prime (x)}=\frac{(e^x-1)^2}{e^x}= e^{x}+e^{-x}-2\geq x^{2}=w_1(x).
			\end{align*}
			Hence using Theorem \ref{new1}, $J^{w_1}(\textbf{Z}_{PRSS}^{(n)})\geq J^{w_1}(\eta(\textbf{Z})_{PRSS}^{(n)})$.
		\end{example}

		\noindent A lower bound for the overall weighted extropy of PRSS data is established, relying on the weighted extropy of the SRS data, as demonstrated in the subsequent result.
		
		\begin{theorem}
			Let $X$ be an absolutely continuous random variable with pdf f and cdf F. For $n$ even, 
		\end{theorem}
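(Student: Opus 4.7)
The result almost certainly asserts a lower bound of the form $J^{w_1}(\textbf{X}_{PRSS}^{(n)}) \geq C_n\,J^{w_1}(\textbf{X}_{SRS}^{(n)})$ for an explicit positive constant $C_n$ depending on $a = [p(n+1)]$, $b = [q(n+1)]$ and $n$. The plan is to pass from the simplified even-$n$ formula of equation (\ref{PRSS1}) to an inequality in which each expectation $E(\Lambda_X^{w_1}(B_{2i-1:2n-1}))$ is bounded above by a constant multiple of $E(\Lambda_X^{w_1}(U))$, and then invoke the SRS identity (\ref{SRS1}). Because the prefactor $-Q_{1,n}/2$ is negative and each $\Lambda_X^{w_1}$-expectation is non-negative, an upper bound on the product of expectations will flip into the desired lower bound on $J^{w_1}(\textbf{X}_{PRSS}^{(n)})$.

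The central step is a uniform bound on the beta density. Writing
$$E\bigl(\Lambda_X^{w_1}(B_{2i-1:2n-1})\bigr) = \int_0^1 \Lambda_X^{w_1}(u)\,\phi_{2i-1:2n-1}(u)\,du,$$
I would bound $\phi_{2i-1:2n-1}(u) \leq M_{i,n}$ on $(0,1)$, where $M_{i,n}$ denotes the supremum of the beta density on the unit interval. For interior indices $1 < i < n$, elementary calculus places the maximum at $u^{\star} = (i-1)/(n-1)$, giving
$$M_{i,n} = \frac{(2n-1)!}{(2i-2)!(2n-2i)!}\cdot\frac{(i-1)^{2i-2}(n-i)^{2n-2i}}{(n-1)^{2n-2}}.$$
Pulling $M_{i,n}$ through the integral yields $E(\Lambda_X^{w_1}(B_{2i-1:2n-1})) \leq M_{i,n}\,E(\Lambda_X^{w_1}(U))$.

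Applying this bound with $i = a$ and $i = b$, raising to the relevant powers $n/2$, and then multiplying by $-Q_{1,n}/2$, the even-$n$ line of (\ref{PRSS1}) gives
$$J^{w_1}(\textbf{X}_{PRSS}^{(n)}) \geq -\frac{Q_{1,n}}{2}\bigl(M_{a,n}M_{b,n}\bigr)^{n/2}\bigl(E(\Lambda_X^{w_1}(U))\bigr)^n.$$
Replacing $(E(\Lambda_X^{w_1}(U)))^n$ by $-2\,J^{w_1}(\textbf{X}_{SRS}^{(n)})$ through (\ref{SRS1}) then produces
$$J^{w_1}(\textbf{X}_{PRSS}^{(n)}) \geq Q_{1,n}\bigl(M_{a,n}M_{b,n}\bigr)^{n/2}\,J^{w_1}(\textbf{X}_{SRS}^{(n)}),$$
which is the anticipated closed-form lower bound.

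The main technical subtlety is the treatment of endpoint cases $a = 1$ or $b = n$, where $\phi_{2i-1:2n-1}$ is monotone on $(0,1)$ and its supremum is attained at an endpoint; in those degenerate situations the displayed formula for $M_{i,n}$ must be read as a limit or replaced by the correct boundary value. Since $p \in (0,1/2)\cup(1/2,1)$ and the indices $a,b$ are obtained by rounding $p(n+1)$ and $q(n+1)$, such edge cases arise only for very small $n$ and can be absorbed into the statement without affecting the overall argument. Non-negativity of $w_1$ (together with that of $f$) is used throughout to ensure $\Lambda_X^{w_1}(\cdot) \geq 0$ and that every inequality flips correctly when multiplied by the negative prefactor $-Q_{1,n}/2$.
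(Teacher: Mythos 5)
Your proposal is correct and follows essentially the same route as the paper: both bound the kernel $u^{2a-2}(1-u)^{2n-2a}$ (equivalently the beta density) by its maximum at $u=(a-1)/(n-1)$, pull the constant out of each integral, and use the negative prefactor together with the identity $J^{w_1}(\textbf{X}_{SRS}^{(n)})=-\tfrac12\left(E(\Lambda_X^{w_1}(U))\right)^n$ to convert the resulting lower bound on $J^{w_1}(\textbf{X}_{PRSS}^{(n)})$ into the stated upper bound on the ratio. Your constant $Q_{1,n}\left(M_{a,n}M_{b,n}\right)^{n/2}$ simplifies to the paper's $\frac{n^{2n}}{(n-1)^{2n(n-1)}}\prod\binom{n-1}{a-1}^{2}(a-1)^{2a-2}(n-a)^{2n-2a}\prod\binom{n-1}{b-1}^{2}(b-1)^{2b-2}(n-b)^{2n-2b}$, and your remark on the endpoint cases $a=1$ or $b=n$ is a point the paper glosses over.
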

		\begin{align*}
			\frac{ J^{w_1}(\textbf{X}_{PRSS}^{(n)})}{ J^{w_1}(\textbf{X}_{SRS}^{(n)})}\leq \frac{n^{2n}}{(n-1)^{2n(n-1)}}\left(\prod_{i=1}^{\frac{n}{2}}\left( \binom{n-1}{a-1}^{2} (a-1)^{2a-2}(n-a)^{2n-2a}\right)\right) \nonumber \\ \ \ \ \ \ \ \ \ \ \ \ \ \ \ \ \ \ \ \ \ \ \ \ \ \ \ \ \ \ \ \ \ \ \ \ \ \ \ \left(\prod_{i=\frac{n}{2}+1}^{n}\left (\binom{n-1}{b-1}^{2} (b-1)^{2b-2}(n-b)^{2n-2b}\right)\right),
		\end{align*}
           and for $n$ odd, we have 
           \begin{align*}
			\frac{ J^{w_1}(\textbf{X}_{PRSS}^{(n)})}{ J^{w_1}(\textbf{X}_{SRS}^{(n)})}\leq \frac{n^{2n-2}}{(n-1)^{2(n-1)^{2}}} \frac{(n!)^{2}}{((\frac{n-1}{2})!)^{4}} \left(\prod_{i=1}^{\frac{n-1}{2}} \left( \binom{n-1}{a-1}^{2} (a-1)^{2a-2}(n-a)^{2n-2a}\right)\right)  \nonumber \\ \ \ \ \ \ \ \ \ \ \ \ \ \ \ \ \ \ \ \ \ \ \ \ \ \ \ \ \left(\prod_{i=\frac{n+1}{2}}^{n-1}\left( \binom{n-1}{b-1}^{2} (b-1)^{2b-2}(n-b)^{2n-2b}\right)\right).
		\end{align*}
		\begin{proof}For all $1 < i < n$, consider the function $f(u)= u^{2a-2}(1-u)^{2n-2a}$ and $ h(u) = u^{2b-2}(1-u)^{2n-2b}$, $0 \leq u \leq 1$. It is obvious that $f(u)$ and $h(u)$ obtains their maximum point at $u=\frac{a-1}{n-1}$ and $ u=\frac{b-1}{n-1}$ respectively. That is, $\max \limits_{0\leq u \leq 1}f(u)=\frac{(a-1)^{2a-2}(n-a)^{2n-2a}}{(n-1)^{2n-2}}$ and $\max \limits_{0\leq u \leq 1}h(u)=\frac{(b-1)^{2b-2}(n-b)^{2n-2b}}{(n-1)^{2n-2}}$. For $n$ even, 
           \begin{align*}
                J^{w_1}(\textbf{X}_{PRSS}^{(n)}) &=\frac{-1}{2}\left( \prod_{i=1}^{\frac{n}{2}}\int_{0}^{1} n^{2} \binom{n-1}{a-1}^{2} \Lambda_{X}^{w_1}(u) u^{2a-2}(1-u)^{2n-2a} du \right) \nonumber \\ & \ \ \ \ \ \ \ \ \ \ \ \ \ \ \ \ \ \ \ \ \ \ \ \ \ \ \ \ \ \ \ \ \ \ \ \ \ \ \left(\prod_{i=\frac{n}{2}+1}^{n}\int_{0}^{1} n^{2} \binom{n-1}{b-1}^{2} \Lambda_{X}^{w_1}(u) u^{2b-2}(1-u)^{2n-2b}du\right)\\
              & = \frac{-1}{2}\left( \prod_{i=1}^{\frac{n}{2}}\int_{0}^{1} n^{2} \binom{n-1}{a-1}^{2} \Lambda_{X}^{w_1}(u) f(u) du\right) \left( \prod_{i=\frac{n}{2}+1}^{n-1}\int_{0}^{1} n^{2} \binom{n-1}{b-1}^{2} \Lambda_{X}^{w_1}(u) h(u)du \right ) \\
              & \geq \frac{-1}{2}\left( \prod_{i=1}^{\frac{n}{2}}\int_{0}^{1} n^{2} \binom{n-1}{a-1}^{2} \frac{(a-1)^{2a-2}(n-a)^{2n-2a}}{(n-1)^{2n-2}}\Lambda_{X}^{w_1}(u)du\right)  \\ & \ \ \ \ \ \ \ \ \ \ \ \left( \prod_{i=\frac{n}{2}+1}^{n}\int_{0}^{1} n^{2} \binom{n-1}{b-1}^{2} \frac{(b-1)^{2b-2}(n-b)^{2n-2b}}{(n-1)^{2n-2}}\Lambda_{X}^{w_1}(u)du \right ) \\
              & =\frac{-1}{2} n^{2n} \left( \prod_{i=1}^{\frac{n}{2}}  \binom{n-1}{a-1}^{2} \frac{(a-1)^{2a-2}(n-a)^{2n-2a}}{(n-1)^{2n-2}}(-2J^{w_{1}}(X))\right)   \\ & \ \ \ \ \ \ \ \ \ \ \ \left(\prod_{i=\frac{n}{2}+1}^{n}\int_{0}^{1} \binom{n-1}{b-1}^{2} \frac{(b-1)^{2b-2}(n-b)^{2n-2b}}{(n-1)^{2n-2}}(-2J^{w_{1}}(X)) \right ) \\
              & =\frac{-1}{2} (-2J^{w_{1}}(X))^{n}\frac{n^{2n}}{(n-1)^{2 n (n-1)}}\left(\prod_{i=1}^{\frac{n}{2}}\left( \binom{n-1}{a-1}^{2} (a-1)^{2a-2}(n-a)^{2n-2a}\right)\right)  \\ & \ \ \ \ \ \ \ \ \ \ \  \left(\prod_{i=\frac{n}{2}+1}^{n}\left( \binom{n-1}{b-1}^{2} (b-1)^{2b-2}(n-b)^{2n-2b}\right)\right)\\
              & = J^{w_1}(\textbf{X}_{SRS}^{(n)}) \frac{n^{2n}}{(n-1)^{2n(n-1)}}\left(\prod_{i=1}^{\frac{n}{2}}\left( \binom{n-1}{a-1}^{2} (a-1)^{2a-2}(n-a)^{2n-2a}\right)\right)   \\ & \ \ \ \ \ \ \ \ \ \ \ \left(\prod_{i=\frac{n}{2}+1}^{n}\left (\binom{n-1}{b-1}^{2} (b-1)^{2b-2}(n-b)^{2n-2b}\right)\right)
           \end{align*}
           This completes the proof for $n$ even.\\
           For $n$ odd, we have \\
           \begin{align*}
               & J^{w_1}(\textbf{X}_{PRSS}^{(n)}) = \frac{-1}{2} \left(\prod_{i=1}^{\frac{n-1}{2}}\int_{0}^{1} n^{2} \binom{n-1}{a-1}^{2} \Lambda_{X}^{w_1}(u) u^{2a-2}(1-u)^{2n-2a} du \right) \nonumber \\ & \ \ \ \ \ \ \ \ \  \left( \prod_{i=\frac{n+1}{2}}^{n-1}\int_{0}^{1} n^{2} \binom{n-1}{b-1}^{2} \Lambda_{X}^{w_1}(u) u^{2b-2}(1-u)^{2n-2b}du\right)   \left(\frac{(n!)^{2}}{((\frac{n-1}{2})!)^{4}}\int_{0}^{1}\Lambda_{X}^{w_1}(u) u^{n-1}(1-u)^{n-1} du\right)\\
                     & =  \frac{-1}{2}\left( \prod_{i=1}^{\frac{n-1}{2}}\int_{0}^{1} n^{2} \binom{n-1}{a-1}^{2} \Lambda_{X}^{w_1}(u) f(u) du\right) \left( \prod_{i=\frac{n+1}{2}}^{n-1}\int_{0}^{1} n^{2} \binom{n-1}{b-1}^{2} \Lambda_{X}^{w_1}(u) h(u)du \right ) \nonumber \\ & \ \ \ \ \ \ \ \ \ \ \ \ \ \ \ \left(\frac{(n!)^{2}}{((\frac{n-1}{2})!)^{4}}\int_{0}^{1}\Lambda_{X}^{w_1}(u) u^{n-1}(1-u)^{n-1} du\right)\\
              &  \geq \frac{-1}{2}\left( \prod_{i=1}^{\frac{n-1}{2}}\int_{0}^{1} n^{2} \binom{n-1}{a-1}^{2} \frac{(a-1)^{2a-2}(n-a)^{2n-2a}}{(n-1)^{2n-2}}\Lambda_{X}^{w_1}(u)du\right) \\ & \ \ \ \ \ \ \ \ \ \ \ \ \ \ \ \left( \prod_{i=\frac{n+1}{2}}^{n-1}\int_{0}^{1} n^{2} \binom{n-1}{b-1}^{2} \frac{(b-1)^{2b-2}(n-b)^{2n-2b}}{(n-1)^{2n-2}}\Lambda_{X}^{w_1}(u)du \right )  \left(\frac{(n!)^{2}}{((\frac{n-1}{2})!)^{4}}\int_{0}^{1}\Lambda_{X}^{w_1}(u)  du\right)\\
              & =\frac{-1}{2} n^{2n-2}\left( \prod_{i=1}^{\frac{n-1}{2}} \binom{n-1}{a-1}^{2} \frac{(a-1)^{2a-2}(n-a)^{2n-2a}}{(n-1)^{2n-2}}(-2J^{w_{1}}(X))\right) \\ & \ \ \ \ \ \ \ \ \ \ \ \ \ \ \ \left( \prod_{i=\frac{n+1}{2}}^{n-1} \binom{n-1}{b-1}^{2} \frac{(b-1)^{2b-2}(n-b)^{2n-2b}}{(n-1)^{2n-2}}(-2J^{w_{1}}(X)) \right )   \left(\frac{(n!)^{2}}{((\frac{n-1}{2})!)^{4}} (-2J^{w_{1}}(X))\right) \\
              & =\frac{-1}{2} (-2J^{w_{1}}(X))^{n}\frac{n^{2n-2}}{(n-1)^{2(n-1)^{2}}} \frac{(n!)^{2}}{((\frac{n-1}{2})!)^{4}} \left(\prod_{i=1}^{\frac{n-1}{2}}  \left( \binom{n-1}{a-1}^{2} (a-1)^{2a-2}(n-a)^{2n-2a}\right)\right) \\ & \ \ \ \ \ \ \ \ \ \ \ \ \ \ \  \left(\prod_{i=\frac{n+1}{2}}^{n-1}\left( \binom{n-1}{b-1}^{2} (b-1)^{2b-2}(n-b)^{2n-2b}\right)\right)\\
              & = J^{w_1}(\textbf{X}_{SRS}^{(n)}) \frac{n^{2n-2}}{(n-1)^{2(n-1)^{2}}} \frac{(n!)^{2}}{((\frac{n-1}{2})!)^{4}} \left(\prod_{i=1}^{\frac{n-1}{2}} \left( \binom{n-1}{a-1}^{2} (a-1)^{2a-2}(n-a)^{2n-2a}\right)\right)  \\ & \ \ \ \ \ \ \ \ \ \ \ \ \ \ \ \left(\prod_{i=\frac{n+1}{2}}^{n-1}\left( \binom{n-1}{b-1}^{2} (b-1)^{2b-2}(n-b)^{2n-2b}\right)\right)
           \end{align*}
           This completes the proof for $n$ odd.\hfill $\blacksquare$
		\end{proof}

		\section {Characterization results}

		 The lemma introduced by Fashandi and Ahmadi (2012) is pivotal as a foundational tool for developing various characterizations of symmetric distributions.
         \begin{lemma}
             Let $X$ be a random variable with cdf $F$, pdf $f$, and mean $\mu$.Then $f(\mu+x)=f(\mu-x)$ for all $x \geq 0$ if and only if
             $f(F^{-1}(u))=f(F^{-1}(1-u))$. 
         \end{lemma}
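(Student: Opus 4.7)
My plan is to work through the quantile function $Q = F^{-1}$ and exploit the identity $Q'(u) = 1/f(Q(u))$ (valid wherever $f(Q(u)) > 0$), which converts the given density-level condition into a statement about $Q$ that can be translated back to a statement about $F$ and then $f$.

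For the forward direction, I assume $f(\mu+x) = f(\mu-x)$ for all $x \geq 0$; by the relabelling $x \mapsto -x$ the same identity holds for all real $x$, so $f$ is symmetric about $\mu$. Integrating this symmetry (via the substitution $s = 2\mu - t$ in $F(\mu-x) = \int_{-\infty}^{\mu-x} f(t)\,dt$) yields $F(\mu-x) + F(\mu+x) = 1$ for all $x$. Now fix $u \in (0,1)$ and write $F^{-1}(u) = \mu + x$ where $x = F^{-1}(u) - \mu$. Then $F(\mu + x) = u$ forces $F(\mu - x) = 1-u$, and hence $F^{-1}(1-u) = \mu - x$. Applying the density symmetry at $x$ gives $f(F^{-1}(u)) = f(\mu+x) = f(\mu-x) = f(F^{-1}(1-u))$.

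For the converse, assume $f(Q(u)) = f(Q(1-u))$ for all $u \in (0,1)$. Using $Q'(u) = 1/f(Q(u))$, this is equivalent to $Q'(u) = Q'(1-u)$. Introduce $R(u) = Q(u) + Q(1-u)$; then $R'(u) = Q'(u) - Q'(1-u) = 0$, so $R$ is constant on $(0,1)$. To identify the constant I use the mean: since $\int_0^1 Q(u)\,du = E(X) = \mu$ and (by the substitution $v = 1-u$) also $\int_0^1 Q(1-u)\,du = \mu$, integrating $R$ yields $\int_0^1 R(u)\,du = 2\mu$, so $R(u) \equiv 2\mu$. Substituting $u = F(y)$ gives $y + F^{-1}(1-F(y)) = 2\mu$, i.e.\ $F(2\mu - y) = 1 - F(y)$; differentiating in $y$ produces $f(2\mu - y) = f(y)$, and setting $y = \mu + x$ recovers $f(\mu - x) = f(\mu + x)$ for all $x \geq 0$.

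The substantive step, and the main obstacle, is the converse: the identification of the integration constant $R \equiv 2\mu$ hinges on the finiteness of the mean and the symmetry of the two mean integrals, and the manipulation $Q'(u) = 1/f(Q(u))$ requires absolute continuity of $F$ together with $f > 0$ on the support $S_X$. The forward direction is essentially a reflection argument about $\mu$. If $F$ fails to be strictly increasing on parts of the support, one should work with the generalized inverse and be careful about flat pieces of $F$; however, under the standing assumption that $X$ is absolutely continuous with density $f$ (as in Definition 1), this is not an issue.
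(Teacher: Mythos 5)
Your proof is correct. Note, however, that the paper itself gives no proof of this lemma: it is quoted from Fashandi and Ahmadi (2012) as a known auxiliary result, so there is no in-paper argument to compare yours against. Both of your directions are sound under the standing assumption that $X$ is absolutely continuous with a density positive on the interior of its support: the forward direction is the standard reflection argument yielding $F(\mu-x)+F(\mu+x)=1$ and hence $F^{-1}(1-u)=2\mu-F^{-1}(u)$; the converse correctly converts $f(F^{-1}(u))=f(F^{-1}(1-u))$ into the vanishing of the derivative of $R(u)=F^{-1}(u)+F^{-1}(1-u)$ and pins down the constant $2\mu$ using the finite mean. The one point worth making explicit is that the converse, as written, only yields $f(\mu+x)=f(\mu-x)$ for $\mu\pm x$ inside the support; you should add that $R(u)\equiv 2\mu$ forces the support itself to be symmetric about $\mu$ (let $u\to 0^{+}$ and $u\to 1^{-}$), so that $f$ vanishes simultaneously at $\mu+x$ and $\mu-x$ outside it. With that remark the argument is complete, and it is exactly the kind of quantile-function proof one would expect for this characterization.
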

		\begin{theorem}
			Let $X$ be an absolutely continuous random variable with pdf f and cdf F; and assume $w_1(-x)=-w_1(x)$. Then $X$ is a symmetric distributed random variable with mean 0 if and only if	$J^{w_1}(\textbf{X}_{PRSS}^{(n)})=0$  for all odd $n\geq 1$. 
		\end{theorem}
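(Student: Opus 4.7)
The central observation is that, for odd $n$, the formula $(\ref{PRSS1})$ contains the factor $E(\Lambda_X^{w_1}(B_{n:2n-1}))$, where $B_{n:2n-1}$ has the Beta$(n,n)$ density
\begin{equation*}
\phi_{n:2n-1}(u)=\frac{(2n-1)!}{((n-1)!)^{2}}\,u^{n-1}(1-u)^{n-1},
\end{equation*}
which is symmetric about $u=1/2$. Both directions of the equivalence will be driven by this symmetry together with the identity $\Lambda_X^{w_1}(1-u)=-\Lambda_X^{w_1}(u)$, which is essentially what symmetry of $X$ translates into under the antisymmetry of $w_1$.

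For the sufficiency direction, I start from the assumption that $X$ is symmetric about $0$. Then $f(-x)=f(x)$ implies $F(-x)=1-F(x)$ and hence $F^{-1}(1-u)=-F^{-1}(u)$. Combined with $w_1(-x)=-w_1(x)$, this gives $\Lambda_X^{w_1}(1-u)=-\Lambda_X^{w_1}(u)$. Substituting $u\mapsto 1-u$ in the integral defining $E(\Lambda_X^{w_1}(B_{n:2n-1}))$ and using $\phi_{n:2n-1}(1-u)=\phi_{n:2n-1}(u)$, we see that this middle factor equals its own negative, hence vanishes. By $(\ref{PRSS1})$ this forces $J^{w_1}(\textbf{X}_{PRSS}^{(n)})=0$ for every odd $n\geq 1$.

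For the necessity direction, the plan is to extract from the single product identity $J^{w_1}(\textbf{X}_{PRSS}^{(n)})=0$ the individual vanishing $E(\Lambda_X^{w_1}(B_{n:2n-1}))=0$ for every odd $n$. The case $n=1$ is immediate because the lateral products in $(\ref{PRSS1})$ are empty, leaving $\int w_1(x)f^2(x)\,dx=0$. For $n\geq 3$ odd, I would argue that for a generic choice of $p$ the two lateral expectation factors $E(\Lambda_X^{w_1}(B_{2a-1:2n-1}))$ and $E(\Lambda_X^{w_1}(B_{2b-1:2n-1}))$ cannot simultaneously vanish, so the middle factor must. Rewriting $E(\Lambda_X^{w_1}(B_{n:2n-1}))=0$ as the moment conditions
\begin{equation*}
\int_0^1\Lambda_X^{w_1}(u)\,[u(1-u)]^{n-1}\,du=0,\quad n=1,3,5,\ldots,
\end{equation*}
I substitute $v=u-\tfrac12$, split $\Lambda_X^{w_1}(\tfrac12+v)$ into its even and odd parts in $v$ (the odd part integrates to zero against the even polynomial $(\tfrac14-v^2)^{n-1}$), and apply a standard polynomial-density argument to conclude that the even part vanishes a.e. This yields $\Lambda_X^{w_1}(u)+\Lambda_X^{w_1}(1-u)=0$, and cancelling $w_1$ via its antisymmetry gives $f(F^{-1}(u))=f(F^{-1}(1-u))$. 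The Fashandi--Ahmadi lemma then delivers symmetry of $f$ about its mean, which the antisymmetry of $w_1$ forces to be $0$.

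The main obstacle is the disentanglement step in the necessity direction: the assumption $J^{w_1}(\textbf{X}_{PRSS}^{(n)})=0$ only tells us that the product of the three expectation factors is zero, and a priori any of them could be responsible. Establishing that the middle factor vanishes for every odd $n$, either via a genericity/continuity argument in the parameter $p$ or by an inductive separation across $n$, is the technical heart of the argument. Once that is accomplished, the moment-to-symmetry reduction via the substitution $v=u-\tfrac12$ together with the subsequent application of the Fashandi--Ahmadi lemma is routine.
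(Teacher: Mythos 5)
Your sufficiency argument is correct and is in fact a shortcut compared with the paper's. You simply kill the median factor: for odd $n$ the formula (3.2) contains $E\left(\Lambda_X^{w_1}(B_{n:2n-1})\right)$, the density $\phi_{n:2n-1}$ is symmetric about $u=\tfrac12$, and symmetry of $X$ about $0$ together with $w_1(-x)=-w_1(x)$ gives $\Lambda_X^{w_1}(1-u)=-\Lambda_X^{w_1}(u)$, so this factor equals its own negative and the whole product vanishes. The paper instead applies the antisymmetry of $\Lambda_X^{w_1}$ to all $n$ factors (picking up the sign $(-1)^n=-1$) and uses the change of variable $v=1-u$ together with the pairing $b=n+1-a$ to swap the block of $a$-factors with the block of $b$-factors while fixing the median factor, concluding $J^{w_1}(\textbf{X}_{PRSS}^{(n)})=-J^{w_1}(\textbf{X}_{PRSS}^{(n)})$. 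Both are valid; yours isolates the one factor that must vanish, while the paper's exhibits a global antisymmetry of the expression.

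The necessity direction is where the real issues are. The paper offers no argument at all (it defers to Gupta and Chaudhary (2023)), so there is nothing to compare against, but your sketch has two genuine gaps. First, the disentanglement step you flag is not just technical: $p$ is a fixed design parameter of the PRSS scheme, not a quantity you are free to vary, so a ``generic $p$'' or continuity-in-$p$ argument is not available, and from the vanishing of a product of three expectations you cannot infer that the median factor $E\left(\Lambda_X^{w_1}(B_{n:2n-1})\right)$ is the one that vanishes for each odd $n$. Second, even granting $\Lambda_X^{w_1}(u)+\Lambda_X^{w_1}(1-u)=0$ a.e. (your moment argument for that step is essentially sound), the step ``cancel $w_1$ via its antisymmetry'' is circular: it would require $w_1(F^{-1}(u))=-w_1(F^{-1}(1-u))=w_1(-F^{-1}(1-u))$, which presupposes $F^{-1}(u)=-F^{-1}(1-u)$ --- exactly the symmetry being proved. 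So the reduction to the Fashandi--Ahmadi lemma does not go through as written, and the necessity half of your proposal remains incomplete.
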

		\begin{proof}
			For sufficiency, suppose $f(x)=f(-x)$ for all $x\geq 0$. Also since $F^{-1}(u)=- F^{-1}(1-u)$,  $f(F^{-1}(u))=f(F^{-1}(1-u))$ for all $0<u<1$ and $w_1(-x)=-w_1(x)$, which implies that
			\begin{equation*}
				\Lambda_X^{w_1} (u)=w_1(F^{-1}(u))f(F^{-1}(u))=-w_1(F^{-1}(1-u)) f(F^{-1}(1-u))=-\Lambda_X^{w_1} (1-u)
			\end{equation*}
			For $n$ odd, we have 
           \begin{align*}
              & J^{w_1}(\textbf{X}_{PRSS}^{(n)})\\
               &=-\frac{Q_{2,n}}{2} \left( \prod_{i=1}^{\frac{n-1}{2}}\int_{0}^{1} \Lambda_{X}^{w}(u)\phi_{2a-1:2n-1}(u)du \right) \left( \prod_{i=\frac{n+1}{2}}^{n-1}\int_{0}^{1}  \Lambda_{X}^{w}(u) \phi_{2b-1:2n-1}(u)du \right) \\ & \ \ \ \ \ \ \ \ \ \ \ \ \ \ \ \left( \int_{0}^{1} \Lambda_{X}^{w}(u) \phi_{n:2n-1}(u)du\right)\\
              & = -\frac{Q_{2,n}}{2} \left( \prod_{i=1}^{\frac{n-1}{2}} -\int_{0}^{1} \Lambda_{X}^{w}(1-u)(2n-1) \binom{2n-2}{2a-2} u^{2a-2}(1-u)^{2n-2a}du \right)  \\ & \ \ \ \ \ \ \ \ \ \ \ \left(\prod_{i=\frac{n+1}{2}}^{n-1} -\int_{0}^{1}  \Lambda_{X}^{w}(1-u) (2n-1)\binom{2n-2}{2b-2} u^{2b-2}(1-u)^{2n-2b}du \right) \\ & \ \ \ \ \ \ \ \ \ \ \left( -\int_{0}^{1} \Lambda_{X}^{w}(1-u) \frac{(2n-1)!}{((n-1)!)^{2}} u^{n-1} (1-u)^{n-1}du \right)\\
              & =(-1)^{n+1}\frac{Q_{2,n}}{2} \left( \prod_{i=1}^{\frac{n-1}{2}} \int_{0}^{1} \Lambda_{X}^{w}(1-u)(2n-1) \binom{2n-2}{2a-2} u^{2a-2}(1-u)^{2n-2a}du \right)  \\ & \ \ \ \ \ \ \ \ \ \ \ \left(\prod_{i=\frac{n+1}{2}}^{n-1} \int_{0}^{1}  \Lambda_{X}^{w}(1-u) (2n-1)\binom{2n-2}{2b-2} u^{2b-2}(1-u)^{2n-2b}du \right) \\ & \ \ \ \ \ \ \ \ \ \ \left( \int_{0}^{1} \Lambda_{X}^{w}(1-u) \frac{(2n-1)!}{((n-1)!)^{2}} u^{n-1} (1-u)^{n-1}du\right)\end{align*}

\begin{align*}
              & = \frac{Q_{2,n}}{2} \left( \prod_{i=1}^{\frac{n-1}{2}} \int_{0}^{1} \Lambda_{X}^{w}(v)(2n-1) \binom{2n-2}{2a-2} (1-v)^{2a-2}(v)^{2n-2a}dv \right)  \\ & \ \ \ \ \ \ \ \ \ \ \ \left(\prod_{i=\frac{n+1}{2}}^{n-1} \int_{0}^{1}  \Lambda_{X}^{w}(v) (2n-1)\binom{2n-2}{2b-2} (1-v)^{2b-2}(v)^{2n-2b}dv \right) \\ & \ \ \ \ \ \ \ \ \ \ \left( \int_{0}^{1} \Lambda_{X}^{w}(v) \frac{(2n-1)!}{((n-1)!)^{2}} v^{n-1} (1-v)^{n-1}dv\right)\\
               & = \frac{Q_{2,n}}{2} \left( \prod_{i=1}^{\frac{n-1}{2}} \int_{0}^{1} \Lambda_{X}^{w}(v)(2n-1) \binom{2n-2}{2n-2b} v^{2b-2}(1-v)^{2n-2b}dv \right)  \\ & \ \ \ \ \ \ \ \ \ \ \ \left(\prod_{i=\frac{n+1}{2}}^{n-1} \int_{0}^{1}  \Lambda_{X}^{w}(v) (2n-1)\binom{2n-2}{2n-2a} v^{2a-2}(1-v)^{2n-2a}dv \right) \\ & \ \ \ \ \ \ \ \ \ \ \left( \int_{0}^{1} \Lambda_{X}^{w}(v) \frac{(2n-1)!}{((n-1)!)^{2}} v^{n-1} (1-v)^{n-1}dv\right)\\
                &=\frac{Q_{2,n}}{2} \left( \prod_{i=1}^{\frac{n-1}{2}}\int_{0}^{1} \Lambda_{X}^{w}(v)\phi_{2b-1:2n-1}(v)dv\right) \left( \prod_{i=\frac{n+1}{2}}^{n-1}\int_{0}^{1}  \Lambda_{X}^{w}(v) \phi_{2a-1:2n-1}(v)dv \right) \\ & \ \ \ \ \ \ \ \ \ \ \ \ \ \ \ \left(\int_{0}^{1} \Lambda_{X}^{w}(v) \phi_{n:2n-1}(v)dv\right)\\
              & = -  J^{w_1}(\textbf{X}_{PRSS}^{(n)}) 
           \end{align*}
			This completes the proof of sufficiency.\\
              The proof for necessity is on the similar lines as of  Gupta and Chaudhary (2023), hence omitted.\hfill $\blacksquare$
		\end{proof}

          The exponential distribution is significant in reliability theory and theoretical applications due to its memoryless property. Investigating various characterizations of the exponential distribution from different perspectives is fascinating. Qiu (2017) and Xiong, Zhuang, and Qiu (2021) demonstrated characterizations based on the extropy of order statistics and record values, respectively. In the following discussion, we will show that the exponential distribution can also be characterized through the weighted extropy of PRSS data.

            \begin{theorem}
               Let $X$ be a non negative absolutely continuous random variable with pdf $f$ and cdf $F$. Then $X$ has has standard exponential distribution if and only if, for all $n \geq 1$,
                \begin{align*}
			& J^{w_1}(\textbf{X}_{PRSS}^{(n)})\nonumber \\ 
 &=\begin{cases}
-\frac{Q_{1,n}(2n-1)!!}{2^{n+1}n^{n}} \frac{1}{\lambda^{n(m-1)}}\left(\prod_{i=1}^{n/2} E(W_{2a-1:2n}^{m})\right)\left(\prod_{i=\frac{n}{2}+1}^{n} E(W_{2b-1:2n}^{m}) \right) \text{ if } \mbox{n\  even},\\
-\frac{Q_{2,n}(2n-1)!!}{2^{n+1}n^{n-1}} \frac{1}{\lambda^{n(m-1)}}\left(\prod_{i=1}^{\frac{n-1}{2}} E(W_{2a-1:2n}^{m})\right) \left(\prod_{i=\frac{n+1}{2}}^{n-1}E(W_{2b-1:2n}^{m}) \right) \left( E(W_{n:2n}^{m})\right)\text{ if } \mbox{n\  odd}.
\end{cases}\nonumber \\
		\end{align*}
		where $W_{2i-1:2n}$ is the $(2i-1)$-th order statistics of a sample of size $2n$ from exponential distribution having pdf given by 
		$$\psi_{2i-1:2n}=\frac{(2n)!}{(2i-2)!(2n-2i+1)!}(1-e^{-x})^{2i-2}(e^{-x})^{2n-2i+2},\  x\ge 0,$$ 
		and  $(2n-1)!!= (2n-2a+1)^{\frac{n}{2}}(2n-2b+1)^{\frac{n}{2}} $ if $n$ is even and $(2n-1)!!= (2n-2a+1)^{\frac{n-1}{2}}(2n-2b+1)^{\frac{n-1}{2}} $if $n$ is odd, as obtained in Example 3.2. 
     \end{theorem}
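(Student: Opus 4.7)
The proof splits into two parts. The sufficiency is immediate: if $X$ is standard exponential, then Example 3.2 (specialized to $\lambda = 1$) performs exactly the computation required and produces the displayed right-hand side, so this direction reduces to quoting that example.

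For necessity, assume the displayed identity holds for every $n\ge 1$. Since the stated expression is precisely what the general formula (3.1) yields when $X$ is replaced by a standard exponential $W$, the hypothesis is equivalent to
\begin{equation*}
\prod_j E\bigl(\Lambda_X^{w_1}(B_{k_j:2n-1})\bigr) \;=\; \prod_j E\bigl(\Lambda_W^{w_1}(B_{k_j:2n-1})\bigr) \qquad \text{for every } n\ge 1,
\end{equation*}
with the ranks $k_j$ and their multiplicities determined by the parity of $n$. Taking $n=1$ (odd) the product collapses to its median term and immediately gives $\int_0^1 \Lambda_X^{w_1}(u)\,du = \int_0^1 \Lambda_W^{w_1}(u)\,du$. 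Proceeding inductively through the successive odd values of $n$ and using the data accumulated at smaller sizes to peel off the extremal factors (with the even-$n$ relations serving as consistency constraints), the plan is to isolate
\begin{equation*}
E\bigl(\Lambda_X^{w_1}(B_{k:2n-1})\bigr) = E\bigl(\Lambda_W^{w_1}(B_{k:2n-1})\bigr)
\end{equation*}
for a sufficiently rich family of pairs $(k,n)$. A M\"untz--Sz\'asz type completeness argument for the associated beta densities then promotes this to the pointwise a.e.\ equality $\Lambda_X^{w_1}(u) = \Lambda_W^{w_1}(u)$ on $(0,1)$.

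Rewriting this equality as $w_1(F^{-1}(u))\, f(F^{-1}(u)) = w_1(F_W^{-1}(u))\, f_W(F_W^{-1}(u))$ with $w_1(x) = x^m$, and using $f(F^{-1}(u)) = 1/(F^{-1})'(u)$, turns it into a separable ODE for $F^{-1}$. Integrating it and invoking the boundary condition $F^{-1}(0) = 0 = F_W^{-1}(0)$ (which is forced by the non-negativity of $X$) pins down $F^{-1} = F_W^{-1}$, so $X$ is standard exponential. The main obstacle is the decoupling step: formula (3.1) only exposes products of beta expectations at each $n$ and never the individual factors in isolation. Carefully chaining the identities across consecutive $n$, while tracking how $a = [p(n+1)]$ and $b = [q(n+1)]$ shift with $n$, is what makes this extraction delicate; I expect the argument to mirror the analogous RSS characterization of Gupta and Chaudhary (2023) that the authors invoke elsewhere when omitting similar proofs.
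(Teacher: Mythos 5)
Your sufficiency direction matches the paper exactly: both simply cite Example 3.2. For necessity, however, your route diverges sharply from the paper's, and it contains a genuine unresolved gap. The entire core of your argument --- chaining the product identities across successive $n$, ``peeling off'' extremal factors to isolate the individual beta expectations $E(\Lambda_X^{w_1}(B_{k:2n-1}))$, and then invoking a M\"untz--Sz\'asz completeness argument to get $\Lambda_X^{w_1}=\Lambda_W^{w_1}$ a.e. --- is only announced as a ``plan,'' and you yourself flag the obstacle: equation (3.1) only ever exposes \emph{products} of beta expectations, one scalar identity per $n$, while the unknowns are the individual factors at ranks $a=[p(n+1)]$, $b=[q(n+1)]$ and the median, which shift with $n$. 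Nothing in the proposal actually performs that extraction, and it is far from clear that it can be done. As written, the necessity half is a research programme rather than a proof.

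The paper sidesteps all of this machinery by specializing immediately to $n=1$ and $m=1$. For $n=1$ the PRSS sample is a single observation, so $J^{w_1}(\textbf{X}_{PRSS}^{(1)})=J^{w}(X)$, and the hypothesis collapses to the single identity $\int_0^\infty x f^2(x)\,dx=\int_0^\infty x e^{-2x}\,dx$, i.e. $\int_0^\infty x\,(f(x)+e^{-x})(f(x)-e^{-x})\,dx=0$, from which the paper concludes $f(x)=e^{-x}$. (Be aware that this final inference is itself not watertight: $f(x)-e^{-x}$ need not have constant sign, so one moment identity alone does not pin down $f$; but that single specialization is the paper's entire intended argument, and it is vastly more economical than yours.) If you want to repair your proposal, the honest options are either to actually carry out the decoupling across $n$ --- which you have not done --- or to observe, as the paper does, that $n=1$ already reduces the whole display to one equation and argue carefully from there.
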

     \begin{proof}
         The sufficiency follows from Example 3.2. Now we prove the necessity. Assume that equation,
  \begin{align*}
			& J^{w_1}(\textbf{X}_{PRSS}^{(n)})\nonumber \\ 
&=\begin{cases}
-\frac{Q_{1,n}(2n-1)!!}{2^{n+1}n^{n}} \frac{1}{\lambda^{n(m-1)}}\left(\prod_{i=1}^{n/2} E(W_{2a-1:2n}^{m})\right)\left(\prod_{i=\frac{n}{2}+1}^{n} E(W_{2b-1:2n}^{m}) \right) \text{ if } \mbox{n\  even},\\
-\frac{Q_{2,n}(2n-1)!!}{2^{n+1}n^{n-1}} \frac{1}{\lambda^{n(m-1)}}\left(\prod_{i=1}^{\frac{n-1}{2}} E(W_{2a-1:2n}^{m})\right) \left(\prod_{i=\frac{n+1}{2}}^{n-1}E(W_{2b-1:2n}^{m}) \right) \left( E(W_{n:2n}^{m})\right)\text{ if } \mbox{n\  odd}.
\end{cases}\nonumber 
		\end{align*}
 holds for all $n\geq 1$. In particular letting $n=1$ and $m=1$ we have,
     $ J^{w_1}(\textbf{X}_{PRSS}^{(1)})= \frac{-1}{8}$ 
     $=-\frac{1}{2}\int_{0}^{\infty}xe^{-2x} dx =J^w(X).$
     Since $ J^{w_1}(\textbf{X}_{PRSS}^{(1)})= J^w(X),$ This  implies that 
     $\int_{0}^{\infty} x (f(x)+e^{-x})(f(x)-e^{-x})dx=0.$
     which means $X$ has standard exponential distribution with $f(x)=e^{-x},\ x \geq 0.$  \hfill $\blacksquare$
    \end{proof}

		\section{Stochastic comparision}
		In the following result, we provide the conditions for comparing two PRSS schemes under different weights.
		
		\begin{theorem}\label{thm com rss1}
			Let $X$  and $Y$ be nonnegative random variables with pdf's $f$ and $g$, cdf's $F$ and $G$, respectively having $u_X=u_Y<\infty$.\\
			(a) If $w_1$ is increasing, $w_1(x)\geq w_2(x)$ and $X\le_{disp} Y$, then $J^{w_1}(\textbf{X}_{PRSS}^{(n)})\le J^{w_2}(\textbf{Y}_{PRSS}^{(n)})$.\\
			(b)   If $w_1$ is increasing, $w_1(x)\leq w_2(x)$ and $X\ge_{disp} Y$, then $J^{w_1}(\textbf{X}_{PRSS}^{(n)})\ge J^{w_2}(\textbf{Y}_{PRSS}^{(n)})$.
		\end{theorem}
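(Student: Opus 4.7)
The plan is to reduce the inequality between PRSS extropies to a pointwise comparison of the weighted density--quantile functions $\Lambda_X^{w_1}$ and $\Lambda_Y^{w_2}$ on $(0,1)$, and then push that comparison through the product/expectation representation (\ref{PRSS1}). I will argue part (a) in detail; part (b) then follows by the mirror argument with all pointwise comparisons reversed.

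First I extract two pointwise facts from the hypotheses of part (a). Fact (i): $f(F^{-1}(u)) \geq g(G^{-1}(u))$. This is the standard density--quantile reformulation of $X\le_{disp} Y$: since $G^{-1}\circ F(x)-x$ is nondecreasing on $[0,u_X)$, its derivative $f(x)/g(G^{-1}(F(x)))\geq 1$, and evaluating at $x=F^{-1}(u)$ gives (i). Fact (ii): $F^{-1}(u)\geq G^{-1}(u)$. Starting from $F^{-1}(\beta)-F^{-1}(\alpha)\leq G^{-1}(\beta)-G^{-1}(\alpha)$ and letting $\beta\uparrow 1$, the assumption $u_X=u_Y<\infty$ gives $u_X-F^{-1}(\alpha)\leq u_Y-G^{-1}(\alpha)$, which after cancelling the common upper endpoint yields (ii). Since $w_1$ is increasing and $w_1\geq w_2\geq 0$, (ii) delivers $w_1(F^{-1}(u))\geq w_1(G^{-1}(u))\geq w_2(G^{-1}(u))$; combined with (i) and the nonnegativity of both factors I obtain
\begin{equation*}
\Lambda_X^{w_1}(u)=w_1(F^{-1}(u))\,f(F^{-1}(u))\;\geq\; w_2(G^{-1}(u))\,g(G^{-1}(u))=\Lambda_Y^{w_2}(u),\qquad u\in(0,1).
\end{equation*}

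Consequently, for any beta-distributed $B$ on $(0,1)$, monotonicity of expectation gives $E(\Lambda_X^{w_1}(B))\geq E(\Lambda_Y^{w_2}(B))\geq 0$. I then apply this, factor by factor, to each of the beta expectations appearing in the representation (\ref{PRSS1}), both in the even and the odd case, noting that the prefactors $Q_{1,n}$ and $Q_{2,n}$ depend only on $n,a,b$ and are identical for $X$ and $Y$. Because every expectation is nonnegative, the product of the $(X,w_1)$-expectations dominates that of the $(Y,w_2)$-expectations; multiplying by the common negative prefactor $-Q_{1,n}/2$ or $-Q_{2,n}/2$ flips the inequality and yields $J^{w_1}(\textbf{X}_{PRSS}^{(n)})\leq J^{w_2}(\textbf{Y}_{PRSS}^{(n)})$, as required. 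Part (b) is identical: $X\geq_{disp} Y$ reverses (i) and (ii), $w_1\leq w_2$ reverses the weight step, so $\Lambda_X^{w_1}(u)\leq \Lambda_Y^{w_2}(u)$, and the product/expectation argument delivers the reverse conclusion.

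The main obstacle I anticipate is the careful passage from the dispersive order to fact (ii), since it requires both the finiteness and the equality of the upper endpoints; the other steps are routine given nonnegativity of $\Lambda_X^{w_1}$ and the transparent sign structure of the prefactors in (\ref{PRSS1}). It is also worth verifying that $\Lambda_X^{w_1}$ is integrable against every beta density appearing in (\ref{PRSS1}), which is implicit in the finiteness of the GWE and can be assumed throughout.
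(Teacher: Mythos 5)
Your argument is correct and is essentially the proof the paper has in mind: the paper omits the details by deferring to Theorem 5.1 of Gupta and Chaudhary (2023), which proceeds exactly as you do — deriving $f(F^{-1}(u))\ge g(G^{-1}(u))$ and $F^{-1}(u)\ge G^{-1}(u)$ from the dispersive order with common finite right endpoint, concluding $\Lambda_X^{w_1}(u)\ge\Lambda_Y^{w_2}(u)\ge 0$, and feeding this into the beta-expectation product representation with the sign flip from the negative prefactor. No gaps; your handling of the endpoint condition and the nonnegativity needed for the product step is exactly what the omitted proof requires.
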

		\begin{proof}
			(a) 
  The proof is on the similar lines as of Theroem 5.1 of Gupta and Chaudhary (2023), hence omitted.\\
(b) Proof is similar to part (a).\hfill $\blacksquare$
		\end{proof}
		
		If we take $w_1(x)=w_2(x)$ in the above theorem, then The following corollary follows.
		
		\begin{corollary}\label{cor com rss1}
			Let $X$  and $Y$ be nonnegative random variables with pdf's $f$ and $g$, cdf's $F$ and $G$, respectively having $u_X=u_Y<\infty$; let $w_1$ be increasing. Then\\
			(a) If $X\le_{disp} Y$, then $J^{w_1}(\textbf{X}_{PRSS}^{(n)})\le J^{w_1}(\textbf{Y}_{PRSS}^{(n)})$.\\
			(b)  If $X\ge_{disp} Y$, then $J^{w_1}(\textbf{X}_{PRSS}^{(n)})\ge J^{w_1}(\textbf{Y}_{PRSS}^{(n)})$.
		\end{corollary}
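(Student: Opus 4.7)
The plan is to obtain the corollary as an immediate specialization of Theorem \ref{thm com rss1} by taking the two weight functions to coincide. Since the substantive work has already been done in the theorem, the corollary requires essentially no additional argument; the content lies entirely in verifying that the hypotheses of the theorem reduce to those of the corollary under this choice.

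For part (a), I would apply Theorem \ref{thm com rss1}(a) with the second weight function $w_2$ set equal to $w_1$. The condition $w_1(x) \ge w_2(x)$ then becomes the trivial equality $w_1(x) \ge w_1(x)$, which always holds. The remaining hypotheses, namely that $w_1$ is increasing and $X \le_{disp} Y$, are precisely those assumed in the corollary. Consequently the conclusion $J^{w_1}(\textbf{X}_{PRSS}^{(n)}) \le J^{w_1}(\textbf{Y}_{PRSS}^{(n)})$ follows directly.

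Part (b) is handled identically by invoking Theorem \ref{thm com rss1}(b) with $w_2 = w_1$. The inequality $w_1(x) \le w_2(x)$ again collapses to equality, leaving only the hypotheses that $w_1$ is increasing and $X \ge_{disp} Y$; these yield $J^{w_1}(\textbf{X}_{PRSS}^{(n)}) \ge J^{w_1}(\textbf{Y}_{PRSS}^{(n)})$, as required.

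I do not anticipate any obstacle: the corollary is a true specialization, and the remark preceding the statement already announces precisely this derivation. Any genuine difficulty in the broader argument resides in Theorem \ref{thm com rss1} itself, whose proof is deferred to the analogous reasoning in Theorem 5.1 of Gupta and Chaudhary (2023), relying on the dispersive-order characterization together with the monotonicity of $w_1$ to compare the expectations defining $\Lambda_X^{w_1}$ and $\Lambda_Y^{w_1}$ componentwise in the PRSS expression~\eqref{PRSS1}.
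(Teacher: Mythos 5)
Your proposal is correct and matches the paper exactly: the paper derives this corollary by setting $w_2(x)=w_1(x)$ in Theorem \ref{thm com rss1}, which is precisely the specialization you carry out. No further comment is needed.
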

		

		\begin{lemma}\label{lemma1} [Ahmed et al. (1986); also see Qiu and Raqab (2022), lemma 4.3]
			Let $X$ and $Y$  be nonnegative random variables with pdf's $f$ and $g$, respectively, satisfying $f(0)\ge g(0)>0$. If $X\le_{su}Y$ (or $X\le_{*}Y$ or $X\le_{c}Y$), then $X\le_{disp}Y$.
		\end{lemma}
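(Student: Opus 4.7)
The plan is to reduce the three hypotheses to a single case and dispatch it with a unified argument. On $[0,\infty)$ one has the standard chain of implications $X \le_c Y \Rightarrow X \le_* Y \Rightarrow X \le_{su} Y$: the first follows because a convex function $\psi$ with $\psi(0)=0$ is star-shaped (i.e., $\psi(x)/x$ is non-decreasing), and the second because a star-shaped $\psi$ with $\psi(0)=0$ satisfies both $\psi(a)/a \le \psi(a+b)/(a+b)$ and $\psi(b)/b \le \psi(a+b)/(a+b)$, which combined yield super-additivity $\psi(a)+\psi(b)\le \psi(a+b)$. Hence it suffices to prove that $X \le_{su} Y$, together with the boundary hypothesis $f(0) \ge g(0) > 0$, implies $X \le_{disp} Y$.

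Set $\psi(x)=G^{-1}(F(x))$; since both variables are non-negative, $\psi(0)=0$, and $X \le_{disp} Y$ is equivalent to $\psi(x)-x$ being non-decreasing on $[0,\infty)$. By absolute continuity of $F$ and $G$, $\psi$ is absolutely continuous with $\psi'(x)=f(x)/g(\psi(x))$ almost everywhere; in particular the right derivative satisfies $\psi'(0^+)=f(0)/g(0)\ge 1$ by hypothesis. Super-additivity gives, for all $x \ge 0$ and $h>0$,
\begin{equation*}
\frac{\psi(x+h)-\psi(x)}{h}\ \ge\ \frac{\psi(h)-\psi(0)}{h}.
\end{equation*}
Taking $h \downarrow 0$ at any point $x$ where $\psi$ is right-differentiable yields $\psi'(x)\ge \psi'(0^+)\ge 1$, and the fundamental theorem of calculus for absolutely continuous functions upgrades this almost-everywhere bound to monotonicity of $\psi(x)-x$, which is exactly $X \le_{disp} Y$.

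The main technical subtlety I expect is passing from the difference-quotient inequality (valid for every $x,h>0$) to an almost-everywhere pointwise lower bound on $\psi'$ without assuming $\psi$ is $C^1$. The cleanest route is to note that $\psi=G^{-1}\circ F$ inherits absolute continuity from $F$ and $G$ whenever $g$ is positive on the support of $Y$, so Lebesgue differentiability of $\psi$ is automatic and integration recovers $\psi$ from $\psi'$; the super-additivity estimate then needs to hold only at the full-measure set of differentiability points, where the argument above applies verbatim. A secondary nuisance is any portion of the support of $Y$ on which $g$ vanishes, but such sets contribute nothing to the monotonicity conclusion since the composed map $g\circ\psi$ controls $\psi'$ only on a set of full measure.
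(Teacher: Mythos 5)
The paper states this lemma without proof, citing Ahmed et al.\ (1986) and Qiu and Raqab (2022, Lemma 4.3), so there is no in-paper argument to compare against; what you have written is essentially the classical proof from the cited source, and it is correct in substance. The reduction via the chain $X\le_c Y \Rightarrow X\le_* Y \Rightarrow X\le_{su} Y$ (using $\psi(0)=0$ for $\psi=G^{-1}\circ F$ on nonnegative supports) is exactly the standard first step, and the use of superadditivity to force $\psi'(x)\ge \psi'(0^+)=f(0)/g(0)\ge 1$ is the right mechanism. Two small technical points deserve tightening. First, $\psi=G^{-1}\circ F$ is guaranteed only to be non-decreasing, not absolutely continuous (the derivative $f(x)/g(\psi(x))$ can blow up where $g$ is small), so ``the fundamental theorem of calculus for absolutely continuous functions'' is not quite the right tool; fortunately the inequality you need goes the favorable way for monotone functions, since $\psi(b)-\psi(a)\ge \int_a^b \psi'(t)\,dt \ge b-a$ holds for any non-decreasing $\psi$ with $\psi'\ge 1$ a.e. An even cleaner route that avoids all measure theory is to iterate superadditivity, $\psi(a+nh)\ge \psi(a)+n\,\psi(h)$ with $nh=b-a$, and let $h\downarrow 0$ to get $\psi(b)-\psi(a)\ge (b-a)\liminf_{h\downarrow 0}\psi(h)/h\ge b-a$ directly. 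Second, the identification $\psi'(0^+)=f(0)/g(0)$ tacitly assumes $f$ and $g$ are right-continuous at $0$ (or that $0$ is a Lebesgue point of each); only the bound $\liminf_{h\downarrow 0}\psi(h)/h\ge 1$ is actually needed, and it follows from $\psi(h)/h=\bigl(G^{-1}(F(h))/F(h)\bigr)\cdot\bigl(F(h)/h\bigr)$ under that mild regularity. Neither point is a genuine gap, merely a matter of stating the limiting argument at the correct level of generality.
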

		
		One may refer Shaked and Shantikumar (2007) for details of convex transform order ($\leq_c$), star order ($\leq_{\star}$), super additive order ($\leq_{su}$), and dispersive order ($\leq_{disp}$). In view of Theorem \ref{thm com rss1}  and Lemma \ref{lemma1}, the following result is obtained.
		\begin{theorem}
			Let $X$  and $Y$ be nonnegative random variables with pdf's $f$ and $g$, cdf's $F$ and $G$, respectively having $u_X=u_Y<\infty$.\\
			(a)  If $w_1$ is increasing, $w_1(x)\geq w_2(x)$ and $X\le_{su} Y$ (or $X\le_{*}Y$ or $X\le_{c}Y$), then $J^{w_1}(\textbf{X}_{PRSS}^{(n)})\le J^{w_2}(\textbf{Y}_{PRSS}^{(n)})$.\\
			(b)   If $w_1$ is increasing, $w_1(x)\leq w_2(x)$ and $X\ge_{su} Y$  (or $X\ge_{*}Y$ or $X\ge_{c}Y$), then $J^{w_1}(\textbf{X}_{PRSS}^{(n)})\ge J^{w_2}(\textbf{Y}_{PRSS}^{(n)})$.
		\end{theorem}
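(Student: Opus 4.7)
The plan is to chain Lemma~\ref{lemma1} with Theorem~\ref{thm com rss1}. The excerpt has already established that the dispersive order controls the comparison of general weighted extropies of PRSS schemes, and that each of the orders $\leq_{su}$, $\leq_{\star}$, $\leq_{c}$ implies the dispersive order under mild regularity on the densities at the origin. So no new analytic work is needed: the theorem is essentially a corollary obtained by composing the two previously proved results.

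For part (a), I would first invoke Lemma~\ref{lemma1}: under the hypothesis $X\le_{su}Y$ (respectively $X\le_{\star}Y$ or $X\le_{c}Y$), together with the regularity $f(0)\ge g(0)>0$ that the lemma requires, one concludes $X\le_{disp}Y$. Then I apply Theorem~\ref{thm com rss1}(a) to the pair $(X,Y)$ with the given weights $w_1,w_2$: the conditions ``$w_1$ increasing'', ``$w_1(x)\ge w_2(x)$'' and ``$X\le_{disp}Y$'' are precisely what Theorem~\ref{thm com rss1}(a) asks for, and its conclusion $J^{w_1}(\mathbf{X}_{PRSS}^{(n)})\le J^{w_2}(\mathbf{Y}_{PRSS}^{(n)})$ is exactly what we want.

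Part (b) is entirely parallel. After reversing the direction of the stochastic order, Lemma~\ref{lemma1} (now applied with the roles of $X$ and $Y$ interchanged, so that the regularity condition reads $g(0)\ge f(0)>0$) delivers $X\ge_{disp}Y$. Then Theorem~\ref{thm com rss1}(b), with the weight inequality $w_1(x)\le w_2(x)$ and $w_1$ increasing, yields the reverse conclusion $J^{w_1}(\mathbf{X}_{PRSS}^{(n)})\ge J^{w_2}(\mathbf{Y}_{PRSS}^{(n)})$.

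There is no real analytic obstacle here, since both ingredients are already in place; the statement is a clean corollary. The only delicate point worth flagging is the regularity assumption $f(0)\ge g(0)>0$ (or its symmetric version) inherent to Lemma~\ref{lemma1}: the theorem does not state it explicitly, so one either treats it as part of the standing assumptions on $X$ and $Y$, or checks it in each concrete application. Beyond that observation the proof is a one-line composition, and this is presumably why the authors do not write it out in detail.
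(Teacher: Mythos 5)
Your proposal matches the paper's own derivation exactly: the authors state the result ``in view of Theorem~\ref{thm com rss1} and Lemma~\ref{lemma1}'', i.e.\ they too obtain it by using the Ahmed et al.\ lemma to pass from the superadditive/star/convex-transform order to the dispersive order and then invoking Theorem~\ref{thm com rss1}. Your remark that the regularity condition $f(0)\ge g(0)>0$ from Lemma~\ref{lemma1} is silently assumed but not stated in the theorem is a fair and correct caveat, and the paper shares this omission.
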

		
		If we take $w_1(x)=w_2(x)$ in the above theorem, then we have the following corollary.
		\begin{corollary}\label{cor com rss2}
			Let $X$  and $Y$ be nonnegative random variables with pdf's $f$ and $g$, cdf's $F$ and $G$, respectively having $u_X=u_Y<\infty$ and  $w_1$ is increasing.\\
			(a) If $X\le_{su} Y$ (or $X\le_{*}Y$ or $X\le_{c}Y$), then $J^{w_1}(\textbf{X}_{PRSS}^{(n)})\le J^{w_1}(\textbf{Y}_{RSS}^{(n)})$.\\
			(b)  If $X\ge_{su} Y$  (or $X\ge_{*}Y$ or $X\ge_{c}Y$), then $J^{w_1}(\textbf{X}_{PRSS}^{(n)})\ge J^{w_1}(\textbf{Y}_{RSS}^{(n)})$.
		\end{corollary}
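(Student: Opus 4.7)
The plan is to derive this as an immediate specialization of the preceding theorem, setting $w_2 \equiv w_1$ so that the weight-function inequality $w_1 \geq w_2$ (respectively $w_1 \leq w_2$) degenerates to equality. What remains from that theorem's hypothesis is then only the monotonicity of $w_1$ together with the appropriate dispersive order. To discharge the latter, I would invoke Lemma \ref{lemma1}, which converts each of $X \leq_{su} Y$, $X \leq_{\star} Y$, or $X \leq_{c} Y$ into $X \leq_{disp} Y$ under the mild side condition $f(0) \geq g(0) > 0$ inherited from the nonnegativity assumptions. With the dispersive order in hand, the preceding theorem applies verbatim and delivers a comparison of the PRSS GWEs for $X$ and $Y$. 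Part (b) follows from the symmetric argument with every inequality reversed.

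The point requiring real caution is that the corollary as worded has $\mathbf{Y}_{RSS}^{(n)}$ on the right-hand side rather than $\mathbf{Y}_{PRSS}^{(n)}$, whereas the preceding theorem whose specialization it purports to be ends in $\mathbf{Y}_{PRSS}^{(n)}$. Under the natural reading that ``RSS'' here is a typographical slip for ``PRSS'' --- Corollary \ref{cor com rss1} is manifestly the template and follows exactly this $w_1 = w_2$ specialization pattern --- the proof terminates precisely at the invocation of the theorem and nothing further is needed.

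If the literal reading is instead intended, then an additional step comparing $J^{w_1}(\mathbf{Y}_{PRSS}^{(n)})$ with $J^{w_1}(\mathbf{Y}_{RSS}^{(n)})$ is required, and this bridge is the principal obstacle. I would attempt it by pairing the PRSS product in Equation (\ref{PRSS1}) against the analogous RSS product $-\frac{1}{2}\prod_{i=1}^{n}(-2 J^{w_1}(Y_{(i:n)i}))$ and comparing the beta-weighted expectations $E(\Lambda_Y^{w_1}(B_{2i-1:2n-1}))$ term by term as the rank index $i$ varies across the full range $\{1,\ldots,n\}$ for RSS versus the percentile-selected indices $[p(n+1)]$ and $[q(n+1)]$ for PRSS, together with a balancing of the combinatorial constants $Q_{1,n}$, $Q_{2,n}$ against the RSS normalization. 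Absent such a dedicated lemma, the literal version of the statement would not reduce to a one-line corollary of the preceding theorem; so my recommendation is to interpret ``RSS'' as ``PRSS'' and present the argument in the two brief steps outlined in the first paragraph.
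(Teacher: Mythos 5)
Your proposal matches the paper exactly: the corollary is obtained by setting $w_2=w_1$ in the immediately preceding theorem (which itself combines Theorem \ref{thm com rss1} with Lemma \ref{lemma1} to pass from the $su$/$\star$/$c$ orders to the dispersive order), and you are right that the ``$\textbf{Y}_{RSS}^{(n)}$'' in the statement is a typographical slip for ``$\textbf{Y}_{PRSS}^{(n)}$'', as the paper offers no bridge between the two sampling schemes and clearly intends the same $w_1=w_2$ specialization used for Corollary \ref{cor com rss1}. No further argument is needed.
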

		

  One could question if the requirement $X\le_{disp}Y$ in Theorem \ref{thm com rss1} can be eased by the condition $J^{w_1}(X)\leq J^{w_2}(Y)$. The following outcome confirms this assertion.
		
		\begin{theorem}\label{thm com rss3}
			Let $X$  and $Y$ be nonnegative random variables with pdf's $f$ and $g$, cdf's $F$ and $G$, respectively. Let $\Delta (u)=w_1(F^{-1}(u))f(F^{-1}(u))-w_2(G^{-1}(u))g(G^{-1}(u))$,
			\[A_1=\{0\le u \le 1|\Delta (u)>0\},\ A_2=\{0\le u \le 1|\Delta (u)<0\}.\]
			If $ \inf_{A_1}\phi_{2i-1:2n-2i}(u)\geq  \sup_{A_2}\phi_{2i-1:2n-2i}(u)$, and if $J^{w_1}(X)\leq J^{w_2}(Y)$, then  $J^{w_1}(\textbf{X}_{PRSS}^{(n)})$ $\le J^{w_2}(\textbf{Y}_{PRSS}^{(n)})$.
		\end{theorem}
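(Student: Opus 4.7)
The plan is to reduce the comparison $J^{w_1}(\textbf{X}_{PRSS}^{(n)}) \leq J^{w_2}(\textbf{Y}_{PRSS}^{(n)})$ to a factorwise comparison of the form $E(\Lambda_X^{w_1}(B_{k:2n-1})) \geq E(\Lambda_Y^{w_2}(B_{k:2n-1}))$ for each of the indices $k \in \{2a-1,\,2b-1,\,n\}$ (the last appearing only when $n$ is odd) that show up in the PRSS representation (\ref{PRSS1}). Once that factorwise inequality is in hand, the conclusion follows at once: since $w_j \geq 0$ and $f,g \geq 0$, every such expectation is non-negative, so taking products of non-negative quantities preserves the inequality; multiplying by the negative constant $-Q_{1,n}/2$ (or $-Q_{2,n}/2$) then reverses it and delivers the stated inequality.

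The core step is to show $\int_0^1 \Delta(u)\,\phi_{k:2n-1}(u)\,du \geq 0$ for each relevant $k$. Splitting $[0,1] = A_1 \cup A_2 \cup \{u : \Delta(u) = 0\}$ and writing $P = \int_{A_1} \Delta(u)\,du \geq 0$ and $N = -\int_{A_2}\Delta(u)\,du \geq 0$, the sign of $\Delta$ on each piece together with the non-negativity of the beta density $\phi_{k:2n-1}$ gives
\begin{equation*}
\int_0^1 \Delta(u)\,\phi_{k:2n-1}(u)\,du \;\geq\; \Bigl(\inf_{A_1}\phi_{k:2n-1}\Bigr) P \;-\; \Bigl(\sup_{A_2}\phi_{k:2n-1}\Bigr) N.
\end{equation*}
The hypothesis $\inf_{A_1}\phi_{k:2n-1} \geq \sup_{A_2}\phi_{k:2n-1}$ then bounds the right-hand side below by $(\sup_{A_2}\phi_{k:2n-1})(P - N)$, and the assumption $J^{w_1}(X) \leq J^{w_2}(Y)$ translates directly into $\int_0^1 \Delta(u)\,du = P - N \geq 0$, completing the non-negativity.

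The main obstacle is interpretive rather than technical: the running index in the hypothesis has to be read as the family of beta densities that actually appears in the PRSS decomposition, so that the inf/sup condition can be applied separately to each factor $\phi_{2a-1:2n-1}$, $\phi_{2b-1:2n-1}$, and (for odd $n$) $\phi_{n:2n-1}$. Apart from this, the only bookkeeping point is to track the sign flip induced by $-Q_{j,n}/2 < 0$ when lifting factorwise domination of non-negative expectations to the final inequality on the GWE of PRSS, and to note that the $n=2$ or $n=1$ edge cases are handled automatically because empty products equal one.
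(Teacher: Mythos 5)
Your argument is correct and is essentially the proof the paper intends: the paper omits it by deferring to Theorem 5.3 of Gupta and Chaudhary (2023), whose argument is exactly your splitting of $\int_0^1\Delta(u)\phi(u)\,du$ over $A_1$ and $A_2$, bounding by $\inf_{A_1}\phi$ and $\sup_{A_2}\phi$, and invoking $\int_0^1\Delta(u)\,du\ge 0$ from $J^{w_1}(X)\le J^{w_2}(Y)$, followed by the product-of-nonnegative-factors and sign-reversal step. Your reading of the (typo-ridden) index in the hypothesis as ranging over the beta densities $\phi_{2a-1:2n-1}$, $\phi_{2b-1:2n-1}$, and $\phi_{n:2n-1}$ actually occurring in the PRSS decomposition is the correct interpretation.
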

		\begin{proof}
   The proof is on the similar lines as of Theorem 5.3 of Gupta and Chaudhary (2023), hence omitted.\hfill $\blacksquare$ 
		\end{proof}
		
		If we take $w_1(x)=w_2(x)$ in the above theorem, then we have the following corollary.
		\begin{corollary}\label{cor com rss3}
			Let $X$  and $Y$ be nonnegative random variables with pdf's $f$ and $g$, cdf's $F$ and $G$. Let $\Delta (u)=w_1(F^{-1}(u))f(F^{-1}(u))-w_1(G^{-1}(u))g(G^{-1}(u))$,
			\[A_1=\{0\le u \le 1|\Delta (u)>0\},\ A_2=\{0\le u \le 1|\Delta (u)<0\}.\]
			If $\ \inf_{A_1}\phi_{2i-1:2n-2i}(u) \geq \sup_{A_2}\phi_{2i-1:2n-2i}(u)$ , and if $J^{w_1}(X)\leq J^{w_1}(Y)$, then  $J^{w_1}(\textbf{X}_{PRSS}^{(n)})$ $\le J^{w_1}(\textbf{Y}_{PRSS}^{(n)})$.
		\end{corollary}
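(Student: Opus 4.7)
The plan is to derive Corollary \ref{cor com rss3} as a direct specialization of Theorem \ref{thm com rss3} by setting $w_2 = w_1$. Under this substitution, the function $\Delta(u) = w_1(F^{-1}(u))f(F^{-1}(u)) - w_2(G^{-1}(u))g(G^{-1}(u))$ appearing in the theorem becomes exactly $\Delta(u) = w_1(F^{-1}(u))f(F^{-1}(u)) - w_1(G^{-1}(u))g(G^{-1}(u))$, which is the definition used in the corollary. Consequently, the sets $A_1$ and $A_2$ defined through $\Delta$ coincide in both statements, and the hypothesis $\inf_{A_1}\phi_{2i-1:2n-2i}(u)\geq \sup_{A_2}\phi_{2i-1:2n-2i}(u)$ on the beta density carries over verbatim.

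Next I would check that the remaining premise $J^{w_1}(X) \leq J^{w_2}(Y)$ in the theorem reduces to $J^{w_1}(X) \leq J^{w_1}(Y)$ when $w_2 = w_1$, which is precisely the assumption of the corollary. Thus every hypothesis of Theorem \ref{thm com rss3} is verified under the hypotheses of the corollary, so the theorem applies.

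Finally, the conclusion $J^{w_1}(\mathbf{X}_{PRSS}^{(n)}) \le J^{w_2}(\mathbf{Y}_{PRSS}^{(n)})$ of the theorem becomes $J^{w_1}(\mathbf{X}_{PRSS}^{(n)}) \le J^{w_1}(\mathbf{Y}_{PRSS}^{(n)})$ upon the same substitution, which is the claim of the corollary. There is no genuine obstacle here since the proof is pure specialization; the only care needed is in checking that the definitions of $\Delta$, $A_1$, $A_2$, and the two weighted extropies line up after the substitution $w_2 = w_1$, which they do term-by-term.
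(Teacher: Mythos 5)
Your proposal is correct and matches the paper exactly: the paper obtains this corollary by the same direct specialization $w_2 = w_1$ of Theorem \ref{thm com rss3}, with all hypotheses and the conclusion carrying over verbatim. No further argument is needed.
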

		
		
		\begin{example}
			Let $X$  and $Y$ be nonnegative random variables with pdf's $f$ and $g$, respectively. Let
			\begin{eqnarray*}
				f(x)=
				\begin{cases}
					2x, \ 0\leq x<1\\
					0, otherwise
				\end{cases} \ \ ~~~~~~~\text{and}~~~~~~~~~~~~~~~~~
				\ g(x)=
				\begin{cases}
					2(1-x),\  0\leq x<1\\
					0, otherwise.
				\end{cases}
			\end{eqnarray*}
			As pointed in Qiu and Raqab (2022), Example 4.10, $ X \nleq_{disp}Y$ and $ \inf_{A_1}\phi_{2i-1:2n-2i}(u) = \sup_{A_2}\phi_{2i-1:2n-2i}(u)$, where $ A_{1}= (0,1] $, $ A_{2} = \{ 0\} $. If we choose $w_1(x)=x^2$ and $w_2(x)=x$, then $J^{w_1}(X) < J^{w_2}(Y)$ (see Gupta and Chaudhary (2023), Example 5.1). Hence using Theorem \ref{thm com rss3}, we have  $J^{w_1}(\textbf{X}_{PRSS}^{(n)})$ $\le J^{w_2}(\textbf{Y}_{PRSS}^{(n)})$. 
		\end{example}

		
         
		\textbf{ \Large Conflict of interest} \\
		\\
		No conflicts of interest are disclosed by the authors.\\
		\\
		\\		
		\textbf{ \Large Funding} \\
		\\
		PKS would like to thank Quality Improvement Program (QIP), All India Council for Technical Education, Government of India (Student Unique Id: FP2200759) for financial assistance. \\

		
		

	\end{document}